\documentclass[11pt]{article}
\usepackage{amsthm}

\usepackage{graphicx}
\usepackage{mathbbol} %
\usepackage[english]{babel}
\usepackage{amssymb}
\usepackage{amsmath}
\usepackage{mathtools}
\usepackage{enumerate} 
\usepackage{url}
\usepackage[usenames,dvipsnames,svgnames,table]{xcolor}
\usepackage{tikz}%
\usepackage[normalem]{ulem}

\DeclareMathOperator    \argmin         {arg\,min}
\DeclareMathOperator    \argmax         {arg\,max}

\DeclareMathOperator    \iconv          {ic} %
\DeclareMathOperator    \conv           {conv}

\DeclareMathOperator    \dist           {dist}

\DeclareMathOperator    \intr                   {int}

\DeclareMathOperator    \rec                    {rec}

\DeclareMathOperator    \verts          {vert}
\DeclareMathOperator    \vol                    {vol}

\newcommand{\N}{\mathbb{N}}

\newcommand{\Q}{\mathbb{Q}}
\newcommand{\R}{\mathbb{R}}

\newcommand{\Z}{\mathbb{Z}}

\newcommand\st{:}

\newtheorem{theorem}{Theorem}[section]
\newtheorem{corollary}[theorem]{Corollary}
\newtheorem{lemma}[theorem]{Lemma}
\newtheorem*{lemma*}{Lemma}
\newtheorem{proposition}[theorem]{Proposition}
  \theoremstyle{remark}

  \theoremstyle{remark}

  \theoremstyle{remark}
\newtheorem{claim}[theorem]{Claim}

\newtheorem{definition}[theorem]{Definition}

\newcommand{\muconvex}{\mu_{\mathrm{c}}}

\addtolength{\oddsidemargin}{-30pt}
\addtolength{\evensidemargin}{-30pt}
\addtolength{\textwidth}{60pt}

\addtolength{\topmargin}{-22pt}
\addtolength{\textheight}{32pt}

\renewcommand{\varepsilon}{\epsilon}

\renewcommand{\alpha}{g}

\begin{document}

\title{Sublinear Bounds for a Quantitative Doignon-Bell-Scarf Theorem}

\author{
Stephen R.\ Chestnut\thanks{Institute for Operations Research, Department of Mathematics, ETH Z\"urich, \texttt{stephenc@ethz.ch}}
\and
Robert Hildebrand\thanks{Mathematical Sciences Department, IBM T.\ J.\ Watson Research Center, \texttt{rhildeb@us.ibm.com}}\and
\and
Rico Zenklusen\thanks{Institute for Operations Research, Department of Mathematics, ETH Z\"urich, \texttt{ricoz@math.ethz.ch}. Supported by Swiss National Science Foundation grant 200021\_165866.}
}

\date{\today}

\maketitle
 
\begin{abstract}
The recent paper ``A quantitative Doignon-Bell-Scarf Theorem" by Aliev et al.\ generalizes the famous Doignon-Bell-Scarf Theorem on the existence of integer solutions to systems of linear inequalities.  Their generalization examines the number of facets of a polyhedron that contains exactly $k$ integer points in $\R^n$.  They show that there exists a number $c(n,k)$ such that any polyhedron in $\R^n$ that contains exactly $k$ integer points has a relaxation to at most $c(n,k)$ of its inequalities that will define a new polyhedron with the same integer points.  They prove that $c(n,k) = O(k)2^n$.  In this paper, we improve the bound asymptotically to be sublinear in $k$, that is, $c(n,k) = o(k) 2^n$.  We also provide lower bounds on $c(n,k)$, along with other structural results.  For dimension $n=2$, our upper and lower bounds match to within a constant factor.
\end{abstract}

\section{Introduction}
The classical theorem of Helly states that for any finite collection of convex subsets of $S = \R^n$, if the intersection of every $n+1$ subsets is nonempty, then the intersection of the entire collection is nonempty.  
In 1973, Doignon was curious as to how such a theorem could hold over the discrete set of integers $S = \Z^n$.  
What resulted is a famous theorem that can be phrased as follows:
any system of linear inequalities in $\R^n$ without integer solutions has a subsystem of at most $2^n$ inequalities that also has no integer solutions~\cite[Proposition 4.2]{doignon1973}.  
This result was also independently rediscovered shortly thereafter by both Bell~\cite{bell1977} and Scarf~\cite[Theorem 1.4]{scarf1977}.  %
  In its Helly formulation, the Doignon-Bell-Scarf Theorem states that for any finite collection of convex subsets of $\R^n$, if the intersection of every $2^{n}$ subsets contains at least one integer point, then the intersection of the entire collection contains at least one integer point.

Since Doignon's result, many versions of Helly's theorem have been studied based on the underlying set of feasible points.  For instance, the Helly number with $S = \Z^n \times \R^d$ is $(d+1)2^n$, as was known in~\cite{hoffman79} and was rediscovered in~\cite[Theorem 1.1]{averkov2012}.   See~\cite{amenta2015helly} for a recent survey of variations and applications of Helly's theorem.  
We focus on a quantitative generalization of Doignon's result guided by the following definition. 
\begin{definition}
Given integers $n\geq 1$ and $k\geq 0$, let $c(n,k)$ denote the least integer such that for any $m$, matrix $A\in\R^{m\times n}$, and  $b\in\R^m$, if the polyhedron $\{x\in\R^n \st Ax \leq b\}$ has exactly $k$ integer points, then there exists a subset $I$ of the rows of $A$ with $|I|\leq c(n,k)$ such that $\{x\in\R^n \st A_I x \leq b_I\}$ has exactly the same $k$ integer points, where $A_I$ is the submatrix of $A$ only containing the rows $I$.
\end{definition}
Phrased as a Helly-type theorem, this implies that if a finite collection of convex subsets of $\R^n$ has the property that the intersection of any $c(n,k)$ subsets contains at least $k+1$ integer points, then the intersection of all of them contains at least $k+1$ integer points~\cite{Aliev2016}.
Following this notation, the Doignon-Bell-Scarf Theorem asserts that, for all $n\geq 1$, $c(n,0) = 2^n$.  
The quantity $c(n,k)$ was formally defined by Aliev, De Loera, and Louveaux~\cite{alievIPCO2014}.  A much older result of Bell~\cite{bell1977} is equivalent to the inequality $c(n,k) \leq (k+2)^n$. 
Aliev et al.~\cite[Proof of Theorem 1]{alievIPCO2014} improve the upper bound to $c(n,k)\leq 2^k2^n$ and have since, together with Bassett~\cite[Theorem 1]{Aliev2016}, improved the bound to
\begin{equation}\label{eq: qdbs bound}
c(n,k) \leq \lceil 2(k+1)/3\rceil 2^n - 2 \lceil 2(k+1)/3\rceil + 2.
\end{equation}
This linear bound was recently improved by a constant factor in~\cite[Theorem 3]{averkov2016} to 
\begin{equation}
\label{eq:qdbs bound 2}
c(n, k) \leq \lfloor (k + 1)/2 \rfloor (2^n - 2) + 2^n.
\end{equation}

Our Theorem~\ref{thm: sublinear upper bound} sharpens the upper bound to $o(k)\cdot2^n$ and simplifies the reasoning behind the bound.
To illustrate out technique, let us quickly describe how one can achieve an upper bound of $(k+1)2^n$, which is of the same order as~\eqref{eq: qdbs bound} and~\eqref{eq:qdbs bound 2}.
It begins with a trick that was known already by Bell~\cite{bell1977}, and which is formalized in Lemma~\ref{lem: one point per facet}. 
Recall that a set of points is in convex position if it contains no point that can be expressed as a convex combination of the others.  For a set $V \subseteq \R^n$, denote $\iconv(V) \coloneqq \conv(V)\cap\Z^n$ where $\conv(V)$ is the convex hull of $V$.
Let  $P = \{x\in\R^n\st Ax\leq b\}$ and let $X = P \cap \Z^n$.   If $P$ contains exactly $k$ integer points and is defined by $c(n,k)$ constraints, none of which can be removed without affecting the integer hull, then there exists a set $V\subseteq\Z^n$ of $c(n,k)$ points in convex position such that $\iconv(V)\setminus V \subseteq P$.  That is, the non-vertex lattice points in $\conv(V)$ are contained in $P$.

To prove the upper bound, take $P$ and $V$ as above, and consider the parities of the points in $V$.  
Let $A\subseteq V$ be the set of points with a given parity, that is, with exactly the same coordinates even and odd. 
The midpoint of any pair of points in $A$ is an integer point in $P$, so the number of integer points $k$ in $P$ is lower bounded by the number of midpoints of $A$.  The number of midpoints is $|A+A|-|A|$, which can be lower bounded as $|A+A| - |A| \geq |A| - 1$.  Here,  $A+A$ is the Minkowski sum of $A$ with itself.  The inequality holds, for instance, because $A \subseteq \Z^n$.  Thus each parity class $A$ must have at most $k+1$ points in it.  As there are $2^n$ parity classes, we see that  $c(n,k)=|V|\leq (k+1)2^n$.

For dimension $n=2$, Erd\H{o}s, Fishburn, and F\"uredi~\cite[Theorem 1]{erdos91} proved an inequality, which relaxed,   implies that $|A+A|-|A|\geq \frac{1}{4}|A|^2$ for each finite subset $A\subseteq\R^2$ in convex position.
Applying the reasoning above to this case yields $c(2,k)\leq 8\sqrt{k}$.
The asymptotic estimate $c(2,k) = O(\sqrt{k})$ is not optimal.
Bell~\cite{bell1977} mentions that, by using a result of Andrews~\cite{andrews1963}, any polygon with at least $c$ facets, the removal of any one of which affects the integer hull, has $\Omega(c^3)$ interior lattice points.  
This argument implies that there exists a constant $C$ such that 
\begin{equation}
\label{eq:bell}
c(2,k) \leq C k^{\frac{1}{3}}.
\end{equation}
Theorem~\ref{thm:c(2 k)} gives one explicit choice of constant.

The Doignon-Bell-Scarf Theorem has had many applications in the theory of integer programming and the geometry of numbers.   
Aliev et al.~\cite[Corollary 3]{Aliev2016}\ use $c(n,k)$ to find the $k$ best solutions to an integer linear program by extending the randomized algorithm of Clarkson~\cite{clarkson1995}. 
Cutting planes have recently been studied through the lens of intersection cuts that are derived from maximal lattice-free polyhedra.
See, for instance,~\cite{corner_survey}.  
It follows from the Doignon-Bell-Scarf Theorem that any maximal lattice free polyhedron has at most $2^n$ facets. 
This result has been central in classifying the different types of maximal lattice free convex sets.  
In a similar way, $c(n,k)$ bounds the facet complexity of a maximal polyhedron containing $k$ integer points.

Baes, Oertel, and Weismantel used the Doignon-Bell-Scarf theorem as a basis for describing a dual for mixed-integer convex minimization~\cite{oertel2015}.  
Likewise, $c(n,k)$ can be used to describe a dual for the problem of finding the $k$~best solutions to an integer convex minimization problem. 

Finally, the techniques used for studying $c(n,k)$ in~\cite{Aliev2016} and in this paper involve analyzing lattice polytopes.  A lattice polytope is a polytope whose vertices are contained in $\Z^n$.  In particular, $c(n,k)$ is closely related to the maximum number of vertices of a lattice polytope that contains $k$ non-vertex lattice points.  %

\subsection*{Our Contributions}
This paper improves the asymptotic upper bound on $c(n,k)$ to sublinear in $k$ using the midpoints methodology outlined above and stronger bounds on the cardinality of \emph{sumsets}, the Minkowski sum of two finite sets, from the field of additive combinatorics.   This approach yields the bound as $c(n,k) = \frac{k}{e^{\Omega(\log^{1/7} k)}} 2^n = o(k)2^n$.

We complement the sublinear (in $k$) upper bound on $c(n,k)$ with a $\Omega(k^{\frac{n-1}{n+1}})$ lower bound, where the hidden constant depends on the dimension $n$.
The lower bound is proved by relating $c(n,k)$ to the maximum vertex complexity of a lattice polytope containing $k$ lattice points and then applying a theorem of B\'ar\'any and Larman~\cite{Barany1998} implying that the integer hull of the $n$-dimensional ball with radius $r$ has $\Theta(r^{n(n-1)/(n+1)})$ many vertices, where the neglected constants depend on $n$.

Increasing the lower bounds on $|A+A|$ might be one way to decrease our upper bound.
In dimension greater than two, surprisingly little is known about the minimum possible cardinality of $|A+A|$ for $A$ in convex position. In Proposition~\ref{prop:muconvex-bound}, we derive an upper bound for the minimum number of $|A + A|$ from the integer points in the $n$-dimensional sphere.
When proving the sublinear upper bound, in Theorem~\ref{thm: sublinear upper bound}, we use the convexity of the points only to say that the average of any five points is not contained in the set.  We are unaware
 of lower bounds on sumset cardinalities that completely make use of convexity of the points.  Progress on this front would immediately lead to an improved sublinear upper bound.

As the paper progresses towards proving the bounds, we establish some structural properties of $c(n,k)$ and related sequences that may be of independent interest.
One of the difficulties, in particular, that we encounter while proving the lower bound is that $c(n,k)$ is not monotonic in $k$, which we demonstrate by proving that $c(2,5)=7<8=c(2,4)$.

The formula given by Aliev et al.~\cite{Aliev2016} implies $c(n,2)\leq 2(2^n-1)$ and they left as an open problem whether equality holds.
Gonzalez Merino and Henze (see~\cite{amenta2015helly}) proved equality holds, and we do as well (Proposition~\ref{prop: c(n,2) lower bound}).  We also give an alternative proof that $c(n,2)\leq 2(2^n-1)$ (Theorem~\ref{thm: subspaces}).
Corollary~\ref{cor: large n} and Proposition~\ref{prop: c(n,2) lower bound}, with the discussion immediately following it, show that this is the correct asymptotic behavior in $n$ for every (fixed) $k$, more precisely $|c(n,k)-2^{n+1}|= O(1)$, for every $k\geq 1$.

After this paper was made available online, Averkov, Merino, Henze, Paschke, and Weltge~\cite{averkov2016} posted similar results.  Our definition of $c(n,k)$ corresponds to their definition of $c(\Z^n,k)$.  We will comment on similarities where they apply in this paper.  In particular, they prove the bound $c(n,k) \leq k^{\frac{n-1}{n+1}} \cdot (3n)^{5n}$~\cite[Theorem 4]{averkov2016}.  That bound has optimal dependence on $k$, but the dependence in $n$ is not optimal, so their bound is incomparable to the bound $c(n,k) \leq \frac{k}{e^{\Omega(\log^{1/7}k)}}\cdot 2^n$ proved here.  

Section~\ref{sec: bell} describes Bell's technique in more detail.
The midpoints technique is applied in Section~\ref{sec: upper bounds} to prove the sublinear upper bound and in Section~\ref{sec: lower bounds} we prove the lower bound.
Finally, Section~\ref{sec: nonmonotonic} proves that $c(n,k)$ is not a monotonic function of $k$ by proving exact values for $c(2,k)$ for certain choices of $k$.  Note that this can also be found in~\cite{averkov2016}, where they computationally find values of $c(2,k)$ for $k \in \{0,\dots, 30\}$. 

\section{Bell's Expanding Polyhedron}\label{sec: bell}
 
The main lemma of this section, Lemma~\ref{lem: one point per facet}, is an important tool for understanding the behavior of $c(n,k)$. 
It relates a general system of linear inequalities with a collection of integer points in convex position. 
Each integer point serves as a witness to one inequality in a system where every inequality is necessary to maintain the integer hull.  This is described in Lemma~\ref{lem: one point per facet}.  This idea is similar to the fact that all maximal lattice free convex sets are polyhedra and have at least one lattice point on the relative interior of each facet~\cite{Lovasz}.  See also~\cite[Theorem 16]{averkov2016} which shows that for all $k \geq 1$, all maximal convex sets with $k$ integer points in their interior are polytopes with at least one integer point in the relative interior of each facet.  
 
The basic idea of the proof we give is described by~\cite{bell1977}, and it uses several ideas from~\cite{Aliev2016}.  Similar techniques for related results were used in several other places including~\cite[Lemma 4.4]{averkov2013}.

We begin with a lemma about polyhedral cones and also a lemma about $\Q$-linear independence.  
We say a matrix $A \in \R^{n\times m}$ is \emph{generic} if all subsets of at most $n$ rows are linearly independent.  Let $\intr(K)$ denote the interior of a set $K$.
\begin{lemma}
\label{lem:cone-non-empty}
Let $A \in \R^{n \times m}$ be generic and let $C = \{x \st Ax \leq 0\}$.  If there exists a non-zero $v \in C$, then $\intr(C) \neq \emptyset$.
\end{lemma}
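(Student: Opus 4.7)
The plan is to start from the given nonzero $v \in C$ and perturb it along a suitably chosen direction $w$ to strictly satisfy every constraint simultaneously. Let $I \subseteq \{1,\dots,m\}$ denote the index set of constraints that are active at $v$, that is, $I = \{i : a_i^\top v = 0\}$, and let $J$ be its complement, so $a_j^\top v < 0$ for all $j \in J$.

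First, I would bound $|I|$. By genericity, any $n$ rows of $A$ are linearly independent. If $|I| \geq n$, then $v$ is orthogonal to $n$ linearly independent vectors $a_i$, forcing $v = 0$, a contradiction. Hence $|I| \leq n-1$, and the rows $\{a_i\}_{i \in I}$ are linearly independent. The case $I = \emptyset$ is immediate because then $v$ already lies in $\intr(C)$, so I assume $I \neq \emptyset$.

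Next, I would exhibit a direction $w$ along which every active constraint improves strictly. Since $A_I$ has full row rank, the linear system $A_I w = -\mathbf{1}$ has a solution $w \in \R^n$. For this $w$, $a_i^\top w = -1 < 0$ for every $i \in I$, while for $j \in J$ the value $a_j^\top w$ is some real number and $a_j^\top v < 0$.

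Finally, I would consider $v + \epsilon w$ for small $\epsilon > 0$. For $i \in I$, $a_i^\top(v + \epsilon w) = -\epsilon < 0$; for $j \in J$, continuity of the finitely many linear functionals $a_j^\top$ together with $a_j^\top v < 0$ ensures that $a_j^\top(v + \epsilon w) < 0$ for all sufficiently small $\epsilon > 0$. Thus $v + \epsilon w \in \intr(C)$, completing the proof. The only step that requires any thought is the bound $|I| \leq n-1$, which is really the whole point where genericity is used; everything after that is routine linear algebra and a standard continuity argument.
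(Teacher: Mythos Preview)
Your proof is correct and shares the paper's key observation: splitting the constraints into the active set $I$ and the strict set $J$, and using genericity to bound $|I| \leq n-1$. Where you diverge is in the second half. The paper argues by contradiction: if $\intr(C_I) = \emptyset$ then $C_I$ lies in a hyperplane $\{x : h\cdot x = 0\}$, and a Farkas-type argument expresses both $h$ and $-h$ as nonnegative combinations of the $a_i$ with $i\in I$, yielding a nontrivial linear dependence among those rows. You instead exploit the full row rank of $A_I$ constructively, solving $A_I w = -\mathbf{1}$ and perturbing $v$ to $v+\epsilon w$ to produce an explicit interior point. Your route is more direct and avoids the implicit appeal to LP duality; the paper's route is slightly more conceptual in that it isolates the statement that $C_I$ itself is full-dimensional. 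Both are short and either would be perfectly acceptable here.
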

\begin{proof}
Let $a_1, \dots, a_m \in \R^n$ be the rows of $A$.
Let $I,J \subseteq [m]$ such that $a_i \cdot  v = 0$ for all $i \in I$ and $a_j\cdot v < 0$ for all $j \in J$.  Since $v$ is non-zero and $A$ is generic, $|I|<n$.  Let $C_I = \{x \st a_i \cdot x \leq 0, i \in I\}$, $C_J  = \{x \st a_j \cdot x < 0, j \in J\}$.  Since $v \in C_I \cap C_J \subseteq C$, and $v \in \intr(C_J)$, it is sufficient to show that $\intr(C_I) \neq \emptyset$.

If $\intr(C_I) = \emptyset$, then there exists a hyperplane $H = \{x \st h \cdot x = 0\}$, with $h$ non-zero, such that $C \subseteq H$.  This implies that $h = \sum_{i \in I} \lambda_i a_i$ and $-h = \sum_{i \in I} \mu_i a_i$ for some $\lambda_i, \mu_i \geq 0$.  But then $0 = h - h = \sum_{i \in I} (\lambda_i + \mu_i) a_i$, which is a contradiction since the vectors $a_i$ are linearly independent and $\lambda_i + \mu_i > 0$ for at least one $i \in I$.
\end{proof}

\begin{lemma}
\label{lem:q-linear-independence}
There exists a dense subset $U \subseteq \R^n$ such that any finite subset $\{u_1, \dots, u_k\}$ is linearly independent over $\Q$.  That is, there do not exist non-trivial vectors $q_1, \dots, q_k \in \Q^n$ such that $\sum_{i=1}^k q_i \cdot u_i = 0$.
\end{lemma}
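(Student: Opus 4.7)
The plan is to build $U$ as a countable set $\{u_1, u_2, \ldots\}$ by induction, where each $u_i$ is placed close to the $i$-th element of an enumeration of $\Q^n$ (this secures density), while avoiding a measure-zero set of ``bad'' positions that would introduce a $\Q$-linear dependence. Before starting, I would observe the key translation: writing $q_i \cdot u_i = \sum_{j=1}^n (q_i)_j (u_i)_j$, the condition in the lemma is equivalent to requiring that the full list of $nk$ scalar coordinates $\bigl\{(u_\ell)_j : 1 \le \ell \le k,\, 1 \le j \le n\bigr\}$ be linearly independent over $\Q$. So the goal reduces to producing a dense $U \subseteq \R^n$ such that for every finite subcollection, the combined coordinates form a $\Q$-linearly independent subset of $\R$.

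First I would enumerate $\Q^n = \{p_1, p_2, \ldots\}$. Inductively, suppose $u_1, \ldots, u_{i-1}$ have been chosen so that their combined $n(i-1)$ coordinates are $\Q$-linearly independent. I would then seek $u_i \in B(p_i, 1/i)$ whose $n$ coordinates, appended to the previous list, preserve $\Q$-linear independence. The set of ``bad'' $u_i$ is the set of points $x \in \R^n$ for which some non-trivial rational relation $\sum_{\ell=1}^{i-1}\sum_{j=1}^n c_{\ell,j}(u_\ell)_j + \sum_{j=1}^n c_{i,j}\, x_j = 0$ holds. By the inductive hypothesis, any such relation must have $(c_{i,1}, \ldots, c_{i,n}) \neq 0$, for otherwise it would give a non-trivial $\Q$-relation among the previous coordinates. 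Hence each such relation cuts out a genuine affine hyperplane in $\R^n$, and the bad set is a countable union of hyperplanes (indexed by $\Q^{ni}$), which has Lebesgue measure zero and so cannot exhaust the open ball $B(p_i, 1/i)$. Any $u_i$ in the complement works.

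Finally I would verify the two required properties. Density: given any $x \in \R^n$ and $\varepsilon > 0$, choose $i$ large enough that $1/i < \varepsilon/2$ and $p_i$ is within $\varepsilon/2$ of $x$; then $u_i$ lies within $\varepsilon$ of $x$. $\Q$-linear independence: any finite subset $\{u_{i_1}, \ldots, u_{i_k}\} \subseteq U$ sits inside some initial segment $\{u_1, \ldots, u_N\}$, whose combined coordinates are $\Q$-linearly independent by construction, and a subset of a linearly independent set is linearly independent. Translating back via the identity $\sum_i q_i \cdot u_i = \sum_{i,j} (q_i)_j (u_i)_j$ yields the statement of the lemma.

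The main obstacle, which is largely a bookkeeping point, is the verification that the ``bad'' set at each inductive step is a countable union of \emph{proper} affine hyperplanes rather than all of $\R^n$; this is exactly where the inductive hypothesis is used, to rule out relations that do not involve the coordinates of the new point $u_i$. Once this is in place, standard Baire-category or measure-theoretic reasoning closes the argument.
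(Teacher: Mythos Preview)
Your argument is correct, and it takes a genuinely different route from the paper's. The paper does not proceed inductively: it fixes a Hamel basis $\mathcal{F}$ for $\R$ over $\Q$ (scaled so $|f|\le 1$), selects a countably infinite subset $f_1,f_2,\ldots$, groups these into vectors $f^i=(f_{1+(i-1)n},\ldots,f_{in})\in\R^n$, and then sets $U=\{q_i+\tfrac{1}{i}f^i\}$ for an enumeration $\{q_i\}$ of $\Q^n$. Because the coordinates of all the $f^i$ are distinct Hamel basis elements, $\Q$-linear independence is automatic, and density follows since the perturbations $\tfrac{1}{i}f^i$ tend to zero. Your approach replaces this one-shot construction with an inductive avoidance of a countable union of proper affine hyperplanes, which buys you a more elementary argument: you need only countable choice and the fact that a measure-zero set cannot fill an open ball, rather than the existence of a Hamel basis (which uses the full Axiom of Choice). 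The paper's version, in turn, avoids any induction and makes the perturbations completely explicit once the basis is in hand.

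One small point in your density verification: when you write ``choose $i$ large enough that $1/i<\varepsilon/2$ and $p_i$ is within $\varepsilon/2$ of $x$'', it is worth making explicit that both conditions can be met simultaneously because $B(x,\varepsilon/2)\cap\Q^n$ is infinite, so its indices in the enumeration are unbounded. This is implicit in what you wrote, but spelling it out removes any ambiguity.
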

\begin{proof}
We construct $U$ by starting with $\mathbb{Q}^n$, which is a countable set that is dense within $\mathbb{R}^n$, and then perturbing the vectors in $\mathbb{Q}^n$ to obtain the desired linear independence over $\mathbb{Q}$ without destroying the property of being dense in $\mathbb{R}^n$.

Since $\mathbb{Q}^n$ is countable, there is a numbering of its elements $\mathbb{Q}^n = \{q_i \mid i\in \mathbb{Z}_{\geq 1}\}$.
To perturb the elements $q_i$, let $\mathcal F$ be a basis for $\R$ as a vector space over $\Q$. Since $\R$ is uncountable and $\Q$ is countable, $|\mathcal F| = \infty$. Moreover, we choose a basis $\mathcal{F}$ such that $|f|\leq 1$ for $f\in \mathcal{F}$; indeed by starting with any basis $\mathcal{F}'$ and replacing each non-zero element $f'\in \mathcal{F}'$ by $f'/\lceil|f'|\rceil$, such a basis is obtained.
Let $\{f_1,f_2,\ldots \}\subseteq \mathcal{F}$ be any countably infinite subset of $\mathcal{F}$. We now group the elements of this subsequence into vectors of $\mathbb{R}^n$, by defining for $i\in \mathbb{Z}_{\geq 1}$, $f^i \coloneqq (f_{1+(i-1)n},\ldots, f_{in})$.
We now set
\begin{equation*}
U = \left\{q_i + \frac{1}{i} \cdot f^i \;\middle\vert\; i\in \mathbb{Z}_{\geq 1}\right\}\enspace.
\end{equation*}
By our choice of $f^i$, $U$ clearly has the desired linear independence properties over $\mathbb{Q}$. Hence, it remains to show that $U$ is dense in $\mathbb{R}^n$. To this end, let $r\in \mathbb{R}^n$. Because $\mathbb{Q}^n$ is dense in $\mathbb{R}^n$, there is an infinite subsequence of $q_1, q_2 ,\ldots$ that converges to $r$, i.e., there are indices $1 \leq i_1 < i_2 < \ldots$ such that $\lim_{j\rightarrow \infty} q_{i_j} = r$. For the subsequence of $U$ with the same indices we have
\begin{equation*}
\lim_{j\rightarrow \infty} \left(q_{i_j} + \frac{1}{i_j}\cdot f^{i_j}\right) = \lim_{j\rightarrow\infty} q_{i_j} = r\enspace,
\end{equation*}
where we use the fact that each component of $f^i$, for any $i\in \mathbb{Z}_{\geq 1}$, is at most $1$ in absolute value. This shows that $U$ is dense in $\mathbb{R}^n$ and thus completes the proof.
\end{proof}

For a set $X \subseteq \Z^n$, we say a system of inequalities $Ax\leq b$ is \emph{non-redundant} with respect to $X$ if 
 removing any one inequality strictly increases the number of solutions of $X$ that exist.  
We say that a polyhedron $P$ is a \emph{maximum non-redundant polyhedron} with respect to $X$ if there is no other non-redundant polyhedron $P'$ with more facets such that $P' \cap X = P \cap X$.  We will simply use the terms non-redundant and maximum non-redundant when $X = \Z^n$.

\begin{lemma}\label{lem: one point per facet}
Let $P = \{x \st a_i \cdot x \leq b_i, i=1, \dots, m\}$ be a non-empty maximum non-redundant polyhedron with $a_1, \dots, a_m \in \R^{n}$, $b \in \R^m$ such that $X = P \cap \Z^n$ is finite.  Then there exist $a'_1, \dots, a'_m \in \R^{n}, b' \in \R^m$ such that 
\begin{enumerate}
\item $P' = \{x \st a'_i \cdot x \leq b'_i, i=1, \dots, m\}$ is bounded with $\intr(P') \cap \Z^n = P \cap \Z^n$ and $P'$ has exactly one integer point on the relative interior of each of its $m$ facets.
\item $V = (P' \setminus P) \cap \Z^n$ is in convex position and $|V| = m$.
\end{enumerate}
\end{lemma}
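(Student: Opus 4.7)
The plan is to use Bell's expanding polyhedron technique: first perturb the normals so that each linear functional $x \mapsto a'_i \cdot x$ becomes injective on $\Z^n$, then iteratively relax each $b_i$ by just enough to capture a single new integer point on the corresponding facet. For the perturbation, I would invoke Lemma~\ref{lem:q-linear-independence} to replace each $a_i$ by a nearby $a'_i \in U$, where $U$ is the dense $\Q$-linearly independent set it produces. For sufficiently small perturbations the polyhedron $\{x : a'_i \cdot x \leq b_i\}$ still equals $P$ on $\Z^n$ and remains non-redundant. The crucial gain is that each $x \mapsto a'_i \cdot x$ is injective on $\Z^n$: if $a'_i \cdot y = a'_i \cdot z$ for distinct $y, z \in \Z^n$, then $y - z \in \Z^n \setminus \{0\} \subseteq \Q^n$ would witness a nontrivial $\Q$-linear dependence of the singleton $\{a'_i\}$.

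Next, I would expand the inequalities one at a time. Starting from $P^{(0)} = P$, at iteration $i$ I replace $b_i$ by the smallest $b'_i > b_i$ for which the polyhedron $P^{(i)}$ gains exactly one additional integer point $v_i$. Non-redundancy of inequality $i$ in $P$ (which carries over to $P^{(i-1)}$ since each expansion only enlarges the polyhedron) furnishes a candidate integer point blocked only by the $i$-th inequality, and by injectivity the minimizer of $a'_i \cdot z$ over eligible integer $z$ is unique; set $v_i$ equal to this minimizer and $b'_i = a'_i \cdot v_i$. After $m$ iterations define $P' = P^{(m)}$ and $V = \{v_1, \dots, v_m\}$. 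By construction $a'_i \cdot v_i = b'_i$, while injectivity forces $a'_i \cdot v_j \neq b'_i$ for $j \neq i$, so combined with $v_j \in P'$ we get $a'_i \cdot v_j < b'_i$. This yields both that $v_i$ is the unique integer point on the relative interior of facet $i$, and that $V$ is in convex position, since $v_i$ cannot be a convex combination of the other $v_j$'s. Moreover $X \subseteq \intr(P')$ because $b_i < b'_i$ strictly for every $i$, which gives $\intr(P') \cap \Z^n = X$ and $|V| = m$.

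The main obstacle I foresee is boundedness of $P'$. Because $P'$ has the same recession cone as the perturbed $P$, I would need to argue that a maximum non-redundant polyhedron with $X$ finite must itself be bounded: if it admitted a recession direction, one could modify the defining inequalities (for example, by tilting some $a'_i$ and/or inserting new facets tight at points of $X$) to obtain a bounded non-redundant polyhedron with the same integer hull and strictly more facets, contradicting maximality. Executing this reduction cleanly is the most delicate piece; once $P'$ is known to be bounded, the remaining conditions of the lemma follow immediately from the construction above.
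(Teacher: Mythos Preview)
Your approach is essentially the paper's: perturb the normals via Lemma~\ref{lem:q-linear-independence} so that each $x\mapsto a'_i\cdot x$ is injective on $\Z^n$, then relax the right-hand sides one at a time to pick up a unique integer witness per facet. The iteration and the verification that the $v_i$ are distinct, lie in the relative interiors of distinct facets, and are in convex position are all handled correctly.

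The gap you flag---boundedness---is indeed the crux, but it is intertwined with a second gap you do not flag. Your claim that ``for sufficiently small perturbations the polyhedron $\{x : a'_i \cdot x \leq b_i\}$ still equals $P$ on $\Z^n$ and remains non-redundant'' already presupposes a bounded working region: an arbitrarily small tilt of $a_i$ can flip the sign of $a_i\cdot z - b_i$ for integer $z$ with $\|z\|$ large, and a witness $z_i$ to non-redundancy of inequality $i$ may sit \emph{exactly} on some other facet $j$ and be lost under any perturbation. Thus even if you succeed in proving the original $P$ bounded, the perturbation step as written still leaks.

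The paper resolves both issues together, in a different order from what you propose. It first fixes an auxiliary bounding polytope $Q\supseteq X$, perturbs the $a_i$ only relative to $Q$, and adds slack $2\epsilon$ to every $b_i$, obtaining $P'_0$ that agrees with $P$ on $Q\cap\Z^n$ and is non-redundant with respect to $Q\cap\Z^n$. Maximality is then invoked not to show $P$ is bounded, but to show $P'_0\cap\Z^n = X$ globally (otherwise a non-redundant description of $P'_0\cap Q$ would have more facets than $P$). Boundedness of $P'_0$ finally falls out of Lemma~\ref{lem:cone-non-empty}: because the perturbed matrix is generic, any nonzero recession direction would force a full-dimensional recession cone and hence infinitely many lattice points in $P'_0$, contradicting $|X|<\infty$. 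The subsequent relaxation uses the two thresholds $b_j+\epsilon$ and $b_j+2\epsilon$ precisely to avoid the boundary-witness issue you would otherwise face.
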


\begin{proof}
We would like to enlarge $P$ until each facet contains a unique integer point.  To make these points unique, we will perturb the facets slightly.  For this process, we first establish a bounded region to work on.  

Let $Q$ be any full-dimensional polytope containing $X$ in its interior such that $P$ is non-redundant with respect to $Q \cap \Z^n$ and $P \cap Q \neq \emptyset$.  For $i=1, \dots, m$, let $Y^{>}_i = Q \cap \{x \st a_i \cdot x > b_i\} \cap \Z^n$, which is non-empty since $P$ is non-redundant with respect to $Q \cap \Z^n$, and let $Y_i^{\leq} = Q \cap \{x \st a_i \cdot x \leq b_i\} \cap \Z^n$.

Let $\epsilon >0$ such that for all $i=1, \dots, m$,
\begin{equation}
\label{eq:small-epsilon}
a_i \cdot y > b_i + 3 \epsilon \text{ for all } y \in Y^{>}_i.
\end{equation}
Next define perturbation vectors $\bar a_i$ such that the matrix $A'$ with rows $a_i'=a_i + \bar a_i$ is generic and has all entries linearly independent over $\Q$ and such that 
\begin{equation}
\label{eq:small-bar-a}
| \bar a_i \cdot  x| < \epsilon  \text{ for all } x \in Q \text{ and  } i=1, \dots, m.
\end{equation}
Such perturbations $\bar a_i$ exist due to Lemma~\ref{lem:q-linear-independence}.  
Then, for all $i=1, \dots, m$,
\begin{align}
\label{eq:vj}
a'_i \cdot y &\leq a_i \cdot y + |\bar a_i \cdot y| < b_i + \epsilon & \text{ for all } y \in Y^{\leq}_i,\\
\label{eq:vi}
a'_i \cdot y &\geq a_i \cdot y - |a'_i \cdot y| > b_i + 3\epsilon - \epsilon = b_i +  2\epsilon & \text{ for all } y \in Y^{>}_i.
\end{align}
Set $P_0' = \{ x \st  a_i' \cdot x \leq b_i + 2\epsilon, i=1, \dots, m\}$.  Then $P_0'$ satisfies $\intr( P_0') \cap Q \cap \Z^n = P_0' \cap Q \cap \Z^n=X$ and also $P_0'$ is non-redundant with respect to $Q \cap \Z^n$.

We want that $P_0'$ does not contain integer points outside of $Q$.  We claim that $P_0' \cap \Z^n = X$.  Suppose this is not true.  Consider a non-redundant subsystem of the inequalities from $P_0' \cap Q$.  This system contains all inequalities from $P_0'$ and also at least one from $Q$.  But this contradicts the assumption that $P$ was maximum non-redundant.

We next claim that $P_0'$ is full-dimensional and bounded.  By choice of $Q$, we have $P \cap Q \neq \emptyset$. That is, there exists a $\bar x \in P \cap Q$.  Then, by \eqref{eq:small-bar-a} and the fact that $\bar x \in P$, we have $a_i' \cdot \bar x \leq a_i \cdot \bar x + |\bar a_i \cdot \bar x| \leq b_i + \epsilon < b_i + 2 \epsilon$, and hence $\bar x \in \intr(P_0')$.  Thus $P_0'$ is full-dimensional.  Suppose for the sake of contradiction that $P_0'$ is not bounded.  Then there exists a non-zero vector $u \in \rec(P_0') = \{x \st a'_i\cdot x \leq 0, i=1, \dots, m\}$, where $\rec$ denotes the recession cone.  By Lemma~\ref{lem:cone-non-empty}, since $A'$ is generic, $\rec(P_0')$ is full-dimensional.  Since both $P_0'$ and $\rec(P_0')$ are full-dimensional, $P_0'$ contains infinitely many integer points.  This contradicts the fact that $P_0' \cap \Z^n = X$ and the assumption that $X$ is finite.

We now relax the inequalities of $P_0'$ until they reach integer points in the following way.
In sequence, for $i=1, \dots, m$,  set $b_i' = a_i' \cdot  v_i $ for $v_i \in \argmin_{x \in  P_i' \cap \Z^n}   a_i' \cdot x$  where $P'_i$ is the polyhedron given by the inequalities
\begin{align}
 a_j' \cdot x &\leq b_j'  & j=1, \dots, i-1,\\
 \label{eq:9}
 a_j' \cdot x &\geq b_j  + 2\epsilon & j=i,\\
 \label{eq:10}
 a_j' \cdot x &\leq b_j + \epsilon& j=i+1, \dots, m. 
\end{align}
Since $P_0'$ is non-redundant with respect to $Q \cap \Z^n$, and hence also with respect to $\Z^n$, such a $v_i$ exists for each $i=1, \dots, m$.

Define $P' = \{x \st   a_i' \cdot x \leq b_i', \, i=1, \dots, m\}$.  For any $i = 1, \dots, m-1$ and $j = i+1, \dots, m$, since $v_i \in P'_i$, 
by~\eqref{eq:10} for $P'_i$,
\begin{equation}
a'_j \cdot v_i \leq b_j + \epsilon < b_j + 2\epsilon.
\end{equation}
Then by \eqref{eq:9} for $P'_j$, $v_i \notin P'_{j}$.    
Hence, 
$v_1, \dots, v_{i} \notin P'_{i+1}$.  Thus, $v_{i+1} \notin \{v_1, \dots, v_i\}$, and therefore all $v_1, \dots, v_m$ are distinct.  Since each $a'_i$ has coordinates that are linearly independent over $\mathbb{Q}$,  each facet of $P'$ can contain at most one integer point.
Furthermore, no new integer points were introduced to the interior of $P'$, so $\intr(P') \cap \Z^n = X$.
Hence,
\begin{equation*}
V \coloneqq (P'\setminus P) \cap \mathbb{Z}^n = \{v_1,\ldots, v_m\}\enspace.
\end{equation*}
Moreover, since there are $m$ facets containing $m$ integer points, each facet contains exactly one integer point, and hence this point is on the relative interior of the facet, which also implies that the points in $V$ are in convex position.

This completes the proof.
\end{proof}

\section{Asymptotic Upper Bounds}\label{sec: upper bounds}

We obtain asymptotically sublinear (in $k$) upper bounds on $c(n,k)$ by focusing on a question based on the lattice polytopes.
\begin{definition}
Given $n,k$ two nonnegative integers, $\ell(n,k)$ is the smallest integer such that for any set $V \subseteq \Z^n$ in convex position with $|V|=\ell(n,k)$, we have that $|\iconv(V) \setminus V| \geq k$. 
\end{definition}
 Alternatively, $\ell(n,k+1) - 1$ is the maximum number of vertices of a lattice polytope in $\R^n$ having at most $k$ non-vertex lattice points.  
 Following the definition, for any $V\subseteq\Z^n$ is in convex position with $|V|>\ell(n,k)$, we have $|\iconv(V)\setminus V|\geq k$.  A more general result was also recently proved in~\cite[Theorem 2]{averkov2016} that implies the bound.  We provide a proof here for completeness of the inequality that we need.

\begin{lemma} 
\label{lem:ell-bound}
$c(n,k) \leq \ell(n,k+1)-1.$
\end{lemma}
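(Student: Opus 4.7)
The plan is to select a polyhedron attaining $c(n,k)$ facets, pass to a lattice polytope via Lemma~\ref{lem: one point per facet}, and then apply the definition of $\ell(n,k+1)$ to bound the facet count.

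By the definition of $c(n,k)$, there is a non-redundant polyhedron $P=\{x\st Ax\leq b\}$ with $|P\cap\Z^n|=k$ and exactly $m=c(n,k)$ facets. Since $c(n,k)$ is the maximum possible number of facets for such a polyhedron, no non-redundant polyhedron with the same lattice points has more facets, so $P$ is maximum non-redundant. Assuming $P$ is non-empty (the case $k=0$ is already covered by the Doignon-Bell-Scarf Theorem, $c(n,0)=2^n$), Lemma~\ref{lem: one point per facet} produces a bounded polyhedron $P'$ with $m$ facets such that $\intr(P')\cap\Z^n=P\cap\Z^n=:X$, together with a set $V=(P'\setminus P)\cap\Z^n$ of exactly $m$ lattice points in convex position, with one lying in the relative interior of each facet.

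The key step is the inclusion $\iconv(V)\setminus V\subseteq X$. Since $V\subseteq P'$ and $P'$ is convex, $\conv(V)\subseteq P'$, so $\iconv(V)\subseteq P'\cap\Z^n$. Any lattice point of $P'$ lying outside $\intr(P')$ cannot lie in $X$, hence not in $P$, and therefore must lie in $V$; combined with $\intr(P')\cap\Z^n=X$ this yields the disjoint decomposition $P'\cap\Z^n=X\sqcup V$. Consequently $\iconv(V)\setminus V\subseteq X$, and $|\iconv(V)\setminus V|\leq|X|=k$.

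Finally, $V$ is a set in convex position of cardinality $m=c(n,k)$ with $|\iconv(V)\setminus V|<k+1$, so the definition of $\ell(n,k+1)$ as the smallest cardinality at which convex position forces at least $k+1$ non-vertex lattice points gives $|V|<\ell(n,k+1)$, i.e., $c(n,k)\leq\ell(n,k+1)-1$. The only real subtlety is justifying the decomposition $P'\cap\Z^n=X\sqcup V$, which guarantees that no unintended lattice points slip into $\iconv(V)\setminus V$ from lower-dimensional faces of $P'$; once that is in hand, the bound drops out immediately from the definition of $\ell$.
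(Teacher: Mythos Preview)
Your proof is correct and follows essentially the same approach as the paper: obtain the convex-position set $V$ from Lemma~\ref{lem: one point per facet}, show $\iconv(V)\setminus V$ lies in the $k$ lattice points of $P$, and invoke the definition of $\ell(n,k+1)$. The paper phrases this as a contradiction and simply asserts the inclusion $\iconv(V)\setminus V\subseteq P\cap\Z^n$, whereas you argue it directly and spell out the decomposition $P'\cap\Z^n=X\sqcup V$; your extra care here is justified and the argument goes through.
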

\begin{proof}
Suppose, for the sake of contradiction,  that $c(n,k) \geq \ell(n,k+1)$.  
Let $P = \{x \st  Ax \leq b\}$ be a non-redundant system of $c(n,k)$ inequalities with $k$ integer solutions, and let $V\subseteq \Z^n$ in convex position with $|V| = c(n,k)$ be given by Lemma~\ref{lem: one point per facet}.
Since $V$ is a set of at least $\ell(n,k+1)$ points in convex position, $|\iconv(V)\setminus V|\geq k+1$.
But, $\iconv(V)\setminus V \subseteq P\cap\Z^n$, so we must have $|\iconv(V)\setminus V| \leq k$, which is the contradiction.
\end{proof}

Clearly, the points $\{0,1\}^n$ demonstrate that $\ell(n,1) > 2^n$.  
The proof of $\ell(n, 1) \leq 2^n + 1$ follows by the parity and pigeonhole principle argument outlined in the introduction.  
Consider the parities of the $2^n+1$ points in $V$.  Since there are only $2^n$ total parities, there must be two points with the same parity in $V$.  Since these two points have the same parity, the midpoint of these two points is also an integer point, and thus $V$ contains at least one integer point in its convex hull that is not in $V$.  This is a classic proof that was known in~\cite{doignon1973, bell1977, scarf1977}.

For any set $V \subseteq \R^n$, let $M(V)$ denote  the \emph{set of midpoints} of points in $V$ that are not already part of $V$, that is, $M(V) =  (\tfrac{1}{2}(V + V)) \setminus V = \{ \tfrac{1}{2}(x_1 + x_2) \st x_1, x_2 \in V\}\setminus V$.  
Given $n,s$ two nonnegative integers, $\mu(n,s)$ is the minimum number of midpoints $|M(V)|$ for any set $V \subseteq \R^n$ with $|V| = s$ such that no three points of $V$ are collinear.
In a similar way, $\muconvex(n,s)$ is the minimum number of midpoints $|M(V)|$ for any set $V \subseteq \R^n$ with $|V| = s$ such that $V$ is in convex position.
Obviously, $\mu(n,s)\leq \muconvex(n,s)$; $\ell$ and $\muconvex$ are related in the following way, which follows as well by a parity and pigeonhole principle argument.
The key point is that lower bounds on $\mu$ lead to upper bounds on $\ell$ and therefore upper bounds on $c$, which is what we are after.
\begin{lemma}
\label{lem:mu-ell-bound}
Let $n\geq 1$, $k\geq 0$, and let $ s_{n,k} \coloneqq \min\{s \st \muconvex(n,s) \geq k\}$.  Then  
$\ell(n,k)   \leq   (s_{n,k}-1) 2^n+1$.
\end{lemma}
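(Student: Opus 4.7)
The plan is a parity-pigeonhole argument that reduces bounding $\ell(n,k)$ to the midpoint quantity $\muconvex(n,\cdot)$. Let $V \subseteq \Z^n$ be any set in convex position with $|V| = (s_{n,k}-1)2^n + 1$; we want to exhibit at least $k$ integer points of $\conv(V)$ outside $V$. Partition $V$ into its $2^n$ parity classes (grouping points by the residues of their coordinates mod $2$). Since $|V| > (s_{n,k}-1)2^n$, the pigeonhole principle yields a parity class $A \subseteq V$ with $|A| \geq s_{n,k}$.

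Next I would verify the two structural facts needed to apply $\muconvex$. First, $A$ is in convex position, since any subset of a set in convex position is again in convex position. Hence $|M(A)| \geq \muconvex(n,|A|)$. Second, $\muconvex(n,\cdot)$ is nondecreasing in its second argument: given a minimizer $V^*$ of $\muconvex(n,s+1)$ and any $V'\subset V^*$ of size $s$, every midpoint in $M(V')$ lies in $\frac{1}{2}(V^*+V^*)$, and it cannot lie in $V^*$ (a midpoint of two points of $V^*$ cannot be an extreme point of $\conv(V^*)$), so $M(V') \subseteq M(V^*)$ and thus $\muconvex(n,s) \leq |M(V')| \leq |M(V^*)| = \muconvex(n,s+1)$. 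Combining these with the definition of $s_{n,k}$ gives $|M(A)| \geq \muconvex(n,s_{n,k}) \geq k$.

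It remains to argue that each of these midpoints contributes to $\iconv(V)\setminus V$. Since all points of $A$ share the same parity, every element of $\frac{1}{2}(A+A)$ lies in $\Z^n$; and since $A \subseteq V$, each such midpoint lies in $\conv(V)$, hence in $\iconv(V)$. Finally, no element of $M(A)$ can belong to $V$: an element of $M(A)$ is, by definition, a midpoint of two distinct points of $A \subseteq V$, so it is a nontrivial convex combination of points of $V$ and therefore cannot be a vertex of $\conv(V)$; but convex position of $V$ means every point of $V$ is a vertex of $\conv(V)$. Therefore $M(A) \subseteq \iconv(V) \setminus V$, which gives $|\iconv(V) \setminus V| \geq k$ and proves $\ell(n,k) \leq (s_{n,k}-1)2^n+1$.

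There is no real obstacle in this argument; the only two points that deserve explicit care are the monotonicity of $\muconvex(n,\cdot)$ and the disjointness $M(A) \cap V = \emptyset$, both of which rest on the fact that a midpoint of two distinct points of a convex-position set cannot itself be a vertex of the convex hull.
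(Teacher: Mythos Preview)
Your proof is correct and follows the same parity-pigeonhole approach as the paper. The paper's own argument is just a terse two-sentence version of what you wrote; you have additionally made explicit two points the paper leaves implicit, namely the monotonicity of $\muconvex(n,\cdot)$ (needed because the pigeonholed class may have more than $s_{n,k}$ points) and the disjointness $M(A)\cap V=\emptyset$, both of which you handle correctly via the extreme-point observation.
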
  
\begin{proof}
Using the parity argument with pigeonholes, given $(s_{n,k}-1) 2^n+1$ integer points in convex position, there is a parity class containing at least $s_{n,k}$ of these points.  Of these points, there are at least $k$ midpoints which are all integral.  Thus, any set of  $(s_{n,k}-1) 2^n+1$ integer points in convex position contains at least $k$ other integer points in their convex hull.
\end{proof}

For dimension $n=2$, Erd\H{o}s, Fishburn, and F\"uredi~\cite[Theorem 1]{erdos91} show that $\muconvex(2,s) = \Theta(s^2)$, which leads us to conclude $c(2,k)=O(\sqrt{k})$ (although Theorem~\ref{thm:c(2 k)} does better).  It appears that the quantity $\muconvex(n,k)$ has not been studied for $n \geq 3$.  
However, the quadratic growth of $\muconvex(n,s)$ that is observed in dimension $n=2$ does not hold in higher dimensions.  Indeed, in Proposition~\ref{prop:muconvex-bound}, we show that $\muconvex(n,s) = O(s^{(n+1)/(n-1)})$.

In this section we are interested in lower bounds on $\muconvex(n,s)$.  
It is trivial to show $\muconvex(n,s)\geq s-1$.  Some superlinear bounds come directly from bounds for $\mu(n,s)$.  
Pach~\cite{pach2003} and Stanchescu~\cite{stanchescu2002} have both proved superlinear lower bounds for $\mu(2,s)$, and we could take advantage of these by projecting a convex set in $n$ dimensions to a set in the plane that has no three term arithmetic progression.  Sanders~\cite{sanders2010} proved a more general result for abelian groups and any finite subset containing no nontrivial three-term arithmetic progressions that implies better bounds for $\mu(n,s)$. Most recently, Henriot~\cite{henriot} improved upon Sanders' bounds.  Henriot shows that there exists a constant $C>0$ such for any abelian group $G$ and finite subset $A\subseteq G$ with $|A| \geq 3$ and containing no nontrivial three-term arithmetic progressions we have that
\[|A+A|\geq C|A|\left(\frac{\log{|A|}}{(\log\log|A|)^7}\right).\]
This implies that there exists a constant $C' >0$ for large enough $s$ that 
$$
\mu(n,s) \geq C's \left(\frac{\log{s}}{(\log\log s)^7}\right).
$$

Even so, we can do better for $\muconvex(n,s)$ by applying the following result of Schoen and Shkredov~\cite{Schoen2014}.
\begin{theorem}
\label{thm:schoen}
Let $N$ be a positive integer.  Suppose that $A \subseteq \{1, \dots, N\}$ has no solution to 
\begin{equation}
\label{eq:5}
\frac{1}{5}x_1 + \frac{1}{5}x_2 + \frac{1}{5}x_3 + \frac{1}{5}x_4 + \frac{1}{5}x_5 = x_6
\end{equation}
for distinct integers $x_1, \dots, x_6 \in A$.  Then 
\begin{equation}\label{eq:freiman lower bound}
|A| = \frac{N}{e^{\Omega(\log^{1/7} N)}}.
\end{equation}
\end{theorem}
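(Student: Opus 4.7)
The plan is to cast the problem in the language of Fourier analysis on a finite abelian group and feed it into the density-increment framework developed by Sanders and sharpened by Schoen and Shkredov.

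First, I would embed $A \subseteq \{1,\ldots,N\}$ into a cyclic group $\Z_M$ with $M \asymp N$ chosen large enough that no solution to $x_1 + x_2 + x_3 + x_4 + x_5 = 5x_6$ over $\Z$ is created or destroyed by reduction modulo $M$. Setting $f = \mathbf{1}_A$ and $\delta = |A|/M$, Fourier inversion on $\Z_M$ expresses the total number of solutions in $A^6$ as
\[
T(A) = \frac{1}{M}\sum_{\xi \in \Z_M} \widehat{f}(\xi)^{5}\,\overline{\widehat{f}(5\xi)}.
\]
The $\xi = 0$ term contributes the main term $|A|^6/M$, and the only other contributions allowed by the hypothesis come from tuples with coincident coordinates; a direct enumeration of collision patterns bounds these by $O(|A|^{5})$. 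Comparing the two forces a smallness condition on the off-zero Fourier mass.

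Next, I would convert the Fourier estimate into combinatorial structure. If the main term dominates one already obtains the crude estimate $|A| \lesssim \sqrt{M}$, which is nontrivial but far from the claimed bound. To upgrade to $|A| \le N/e^{\Omega(\log^{1/7} N)}$, one argues that whenever the nontrivial spectral sum $\sum_{\xi \neq 0} |\widehat{f}(\xi)|^5\,|\widehat{f}(5\xi)|$ is comparable to the main term, the large spectrum of $f$ must live in a Bohr set of controlled dimension. Applying Sanders' refined Bogolyubov-Ruzsa theorem, as further tightened by Schoen and Shkredov, to the iterated sumset $2A-2A$ yields a Bohr set $B(\Gamma,\rho)$ with $|\Gamma|$ polylogarithmic in $1/\delta$ and $\rho$ no smaller than an inverse polylogarithm on which a translate of $A$ has density at least $\delta(1+\Omega(1))$. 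Iterating this density increment drives the relative density above $1$ after $O(\log(1/\delta))$ rounds, which is impossible unless $\delta$ was already as small as claimed; at the final step one locates a genuinely distinct six-tuple solution inside a dense piece of a Bohr set, contradicting the hypothesis.

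The main obstacle is entirely quantitative and lies in the Bogolyubov-Ruzsa step: one must simultaneously control the Bohr-set dimension, its radius, and the density gain per iteration, and all three losses must be balanced against the number of iterations. The exponent $\tfrac{1}{7}$ arises from this optimization and reflects the present state of the Bogolyubov-Ruzsa program; any improvement to those constants would immediately propagate to a better exponent here, while a conceptually new structural insight about the specific equation $\tfrac{1}{5}(x_1+\cdots+x_5)=x_6$ does not appear necessary.
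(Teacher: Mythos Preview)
The paper does not prove this theorem at all: it is quoted verbatim as a result of Schoen and Shkredov~\cite{Schoen2014} and used as a black box in the proof of Proposition~\ref{prop:sum17}. There is therefore no ``paper's own proof'' to compare your attempt against.

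As for your sketch itself, the overall architecture---embed in a cyclic group, express the solution count via Fourier, deduce a density increment on a Bohr set using Sanders-type Bogolyubov--Ruzsa bounds, and iterate---is indeed the shape of the Schoen--Shkredov argument. One slip: the claim that ``if the main term dominates one already obtains the crude estimate $|A|\lesssim\sqrt{M}$'' is not right. With main term $|A|^6/M$ and degenerate-tuple contribution $O(|A|^5)$, the comparison $|A|^6/M \le C|A|^5$ only yields the trivial $|A|\le CM$; the nontrivial content lies entirely in the Bohr-set density increment you describe afterward, not in any direct Fourier counting. Apart from that, your outline is a reasonable high-level description of the cited result, but it is not something the present paper undertakes.
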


In order to apply this theorem for our purposes, we must transfer the bounds for subsets of $\{1,\dots, N\}$ to the space we are interested in.   In particular, this must be done in a way that allows $N$ to be of the order of $|A|$ where $A \subseteq \Z^n$ is in convex position.
\begin{definition}[{\cite[Definition 5.21]{TV06}}]
 Let $k \geq 1$ and let $A$ and $B$ be sets in abelian groups $Z$ and $W$ respectively. A \emph{Freiman homomorphism} of order $k$ from $A$ to $B$ is a map $\phi\colon A \to B$ with the property
\begin{equation}
a_1 + \cdots + a_k = a'_1 + \cdots + a'_k \ \Rightarrow \ 
\phi(a_1) + \cdots + \phi(a_k) = \phi(a'_1) + \cdots + \phi(a'_k)
\end{equation}
for all $a_1,\dots,a_k$, $a'_1,\dots, a'_k \in A$. If, in addition, there is an inverse map $\phi^{-1} \colon B \to A$ which
is also a Freiman homomorphism of order $k$ then $\phi$ is a \emph{Freiman isomorphism} of order $k$.
\end{definition}
We will use the term \emph{Freiman $k$-isomorphic} as shorthand for Freiman isomorphic of order $k$.
See~\cite[Section 5.3]{TV06} for a discussion on Freiman isomorphisms.  
A Freiman isomorphism $\phi\colon A\to B$ is useful for transferring results in the domain $B$ to the domain $A$.
In what follows we will take $A\subseteq \Z^n$ to be a set of points in convex position and $B=\Z$, and transfer Schoen and Shkredov's Theorem from $\Z$ to $\Z^n$.
Specifically, \eqref{eq:freiman lower bound} gives us a lower bound on $|A|=|\phi(A)|$.
A technical problem arises because we need $\phi(A)\subseteq\{1,2,\ldots,N\}$, for some $N$ not too much larger than $|A|$, in order to apply Theorem~\ref{thm:schoen}.
The next two lemmas help us construct a satisfactory Freiman isomorphism.

\begin{lemma}[{\cite[Lemma 5.25]{TV06}} ]
\label{lemma:525}
Let $A$ be a finite subset of a torsion-free abelian group $Z$. Then for any integer $k$, there is a Freiman isomorphism $\phi \colon A \to B$ of 
order $k$  to some finite subset $B$ of the integers $\Z$.
\end{lemma}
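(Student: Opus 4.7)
The plan is to first reduce the problem to $A\subseteq \Z^d$ using the structure theorem for finitely generated abelian groups, and then flatten $\Z^d$ down to $\Z$ via a base-$N$ expansion trick with $N$ chosen large enough to preclude ``carrying''.

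First, since $A$ is finite and $Z$ is torsion-free, the subgroup $\langle A\rangle\subseteq Z$ generated by $A$ is finitely generated and torsion-free, hence (by the structure theorem for finitely generated abelian groups) group-isomorphic to $\Z^d$ for some $d\geq 0$. Fix such a group isomorphism $\psi\colon\langle A\rangle\to\Z^d$ and put $A':=\psi(A)\subseteq \Z^d$. Any group homomorphism automatically respects every additive relation, so $\psi$ is a Freiman $k$-homomorphism, and since it is bijective with group-homomorphic inverse, it is in fact a Freiman $k$-isomorphism between $A$ and $A'$. Thus it suffices to construct a Freiman $k$-isomorphism from $A'\subseteq \Z^d$ onto some $B\subseteq \Z$.

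Next, I would use a single linear base-$N$ map. Let $M := \max\{|a_i| : (a_1,\ldots,a_d)\in A'\}$, pick any integer $N > 2kM$, and define
$$\phi(x_1,\ldots,x_d) := \sum_{i=1}^{d} N^{\,i-1}\, x_i \in \Z.$$
Since $\phi$ is $\Z$-linear on all of $\Z^d$, it automatically sends every $k$-fold additive identity in $\Z^d$ to the corresponding identity in $\Z$, so $\phi$ is a Freiman $k$-homomorphism. The core step is to check the reverse direction: suppose $\sum_{j=1}^k \phi(a^{(j)}) = \sum_{j=1}^k \phi(b^{(j)})$ with $a^{(j)}, b^{(j)}\in A'$. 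Setting $c_i := \sum_{j=1}^k (a_i^{(j)} - b_i^{(j)})$ and invoking linearity gives $\sum_{i=1}^d c_i N^{i-1} = 0$, while the triangle inequality yields $|c_i|\leq 2kM < N$ for every $i$. Under this bound, uniqueness of base-$N$ expansions (equivalently, extract the lowest nonzero $c_i$ and reduce modulo $N$) forces $c_i = 0$ for all $i$, i.e., $\sum_j a^{(j)} = \sum_j b^{(j)}$ in $\Z^d$. Specializing to $k=1$ also establishes that $\phi$ is injective on $A'$, so $\phi$ is a bijection onto the finite set $B:=\phi(A')\subseteq\Z$ and $\phi^{-1}$ is a Freiman $k$-homomorphism as well.

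There is no substantive obstacle: the entire proof is essentially bookkeeping. The only non-elementary input is the structure theorem for finitely generated torsion-free abelian groups, and the only subtle design choice is the inequality $N > 2kM$, which is precisely what guarantees that no additive cancellation between coordinates of the $a^{(j)},b^{(j)}$ can masquerade as an accidental integer identity among their $\phi$-images.
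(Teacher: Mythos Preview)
Your proof is correct. Note, however, that the paper does not supply its own proof of this lemma: it is quoted verbatim as \cite[Lemma~5.25]{TV06} and used as a black box. Your argument is essentially the standard one found in Tao--Vu---reduce to $\Z^d$ via the structure theorem for finitely generated torsion-free abelian groups, then collapse $\Z^d$ to $\Z$ by the linear map $x\mapsto \sum_i N^{i-1}x_i$ with $N$ large enough ($N>2kM$) that no carrying can occur among $k$-fold sums. So there is nothing to compare: you have reconstructed the cited proof.
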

The following theorem by Ruzsa allows us to control the size of the ambient set by  switching to a smaller subset.

For $A,B \subseteq \R^n$, and $k$ a positive integer, define $kA \coloneqq A + \cdots + A$ (with $k$ summands) and $A -B \coloneqq \{ a - b \st a \in A, b \in B\}$. 
\begin{theorem}[{\cite[Theorem 2]{Ruz92}}]
\label{theorem:ruz2}
Let $A$ be a finite set of integers, $|A| =  N$ and $k \geq 2$ an integer. 
There is a set $A' \subseteq A$ with $|A'| \geq N/k^2$ which is Freiman isomorphic of order $k$
to a set $T \subseteq \{1, \dots, 2|kA - kA|\}$.
\end{theorem}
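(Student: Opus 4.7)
My plan follows Ruzsa's classical modular compression argument. The strategy is to realize a large subset $A' \subseteq A$ as Freiman $k$-isomorphic to a short initial segment of positive integers, via a composition of reduction modulo a prime $p$ with a multiplicative dilation $\lambda \in (\Z/p\Z)^\times$. Write $d := |kA - kA|$.

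First, by Bertrand's postulate, pick a prime $p$ whose size is comparable to the diameter of $kA - kA$, so that reduction modulo $p$ is a Freiman $k$-isomorphism on $A$: any congruence $\sum_i a_i \equiv \sum_i a_i' \pmod{p}$ among $k$-fold sums forces the difference $\sum a_i - \sum a_i' \in kA - kA$ to lie in $(-p, p) \cap p\Z = \{0\}$. Each dilation $\phi_\lambda(a) := (\lambda a) \bmod p$ for $\lambda \in (\Z/p\Z)^\times$ is then also a Freiman $k$-isomorphism on $A$. Next, fix a short interval $J := \{0, 1, \ldots, m-1\}$ with $m := \min\{\lceil p/k^2 \rceil,\, 2d\}$, and set $A_\lambda := \phi_\lambda^{-1}(J)$. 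Since for fixed $a \in A \setminus \{0\}$ the residues $\lambda a \bmod p$ are equidistributed on $\{1, \ldots, p-1\}$ as $\lambda$ ranges, linearity of expectation produces some $\lambda^*$ with $|A_{\lambda^*}| \geq N \cdot |J|/(p-1) \geq N/k^2$.

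Defining $A' := A_{\lambda^*}$ and $T := \{\phi_{\lambda^*}(a) + 1 : a \in A'\} \subseteq \{1, \ldots, 2d\}$, it remains to verify that $a \mapsto \phi_{\lambda^*}(a) + 1$ is a Freiman $k$-isomorphism from $A'$ onto $T$. The forward direction is inherited directly from $\phi_{\lambda^*}$ being a Freiman $k$-isomorphism on $A \supseteq A'$. For the reverse, $k$-fold sums of elements of $T$ lie in $[k, km] \subseteq [0, p)$ (since $km < p$ by construction), so equality in $\Z$ coincides with equality modulo $p$, which then lifts via $\phi_{\lambda^*}$ to equality in $\Z$ for the corresponding sums in $A'$.

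The main obstacle is the simultaneous tuning of $p$ and $m$: $p$ must be large enough that the mod-$p$ reduction preserves the additive structure of $A$, yet the interval of length $\lceil p/k^2 \rceil$ must fit within $\{1, \ldots, 2d\}$ and satisfy $km < p$. The $1/k^2$ factor in the subset lower bound and the factor $2$ in the target interval together absorb the rounding in Bertrand's postulate and handle both regimes of the minimum defining $m$ (namely whether $p$ is smaller or larger than $2dk^2$). In the corner case where $\max|kA - kA|$ is much larger than $dk^2$, one appeals to further structural arguments (such as rescaling by a common difference when $A$ is nearly an arithmetic progression, or a refined pigeonholing that deletes the residual colliding tuples) to retain the bound.
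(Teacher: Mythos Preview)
The paper does not supply its own proof of this theorem; it is quoted verbatim as \cite[Theorem~2]{Ruz92} and used as a black box. So there is no in-paper argument to compare against.

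That said, your proposal has a genuine gap. Your averaging step yields
\[
|A_{\lambda^*}| \;\geq\; N\cdot \frac{|J|}{p-1} \;=\; N\cdot \frac{m}{p-1},
\qquad m=\min\bigl\{\lceil p/k^2\rceil,\; 2d\bigr\},
\]
and the conclusion $|A_{\lambda^*}|\geq N/k^2$ requires $m\geq (p-1)/k^2$. You chose $p$ comparable to the \emph{diameter} of $kA-kA$, which can be arbitrarily large compared with $d=|kA-kA|$. Whenever $2d<\lceil p/k^2\rceil$ you have $m=2d$, and then $N\cdot m/(p-1)$ can be far below $N/k^2$. Concretely, for $A=\{0,M,2M,\ldots,(N-1)M\}$ with $M$ huge, the diameter of $kA-kA$ is of order $kNM$ while $d=2k(N-1)+1$; your averaging then gives only $|A_{\lambda^*}|\gtrsim N/M$, which is useless. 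Of course the theorem is trivially true for this particular $A$ (divide by $M$), but your argument does not see that structure.

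This ``corner case'' you defer to ``further structural arguments'' is exactly the substance of Ruzsa's theorem: one must compress a set whose diameter may vastly exceed its iterated sumset size. Ruzsa's actual proof does not pick a single prime near the diameter and pigeonhole once into an interval of length $2d$; rather, it first passes through a large prime $q$ exceeding $k\cdot\operatorname{diam}(A)$ (so the embedding $A\hookrightarrow\Z_q$ is a genuine Freiman $k$-isomorphism), uses a dilation plus arc-pigeonhole in $\Z_q$ to pass to an interval of length $\lfloor q/k\rfloor$, and then iterates carefully so that the ambient interval is eventually controlled by $|kA-kA|$ rather than by the original diameter. The two losses of a factor $k$ in the density (giving $N/k^2$) come from two genuine compression rounds, not from a single round with a $k^2$-fold thinner target as you attempt. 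Your outline is the right flavour, but the last paragraph is precisely where the work lies, and it cannot be dismissed as a corner case.
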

From these tools, we can conclude the following proposition.

\begin{proposition}
\label{prop:sum17}
There exists a constant $C>0$ such that for any finite $A \subseteq \R^n$ containg no solutions to \eqref{eq:5} with $x_1,x_2,\ldots,x_6$ distinct, we have
$$| A + A| \geq |A| e^{C\log^{1/7} |A|}.$$
Thus, we have that $\muconvex(n,s) =  s \,e^{\Omega(\log^{1/7}s)}$.
\end{proposition}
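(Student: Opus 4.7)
The plan is to transfer Theorem~\ref{thm:schoen} (which concerns subsets of $\{1,\dots,N\}$) to a general finite set $A\subseteq\R^n$ by means of a chain of Freiman isomorphisms, and then invoke a Pl\"unnecke--Ruzsa type bound to convert the resulting control on $|5B-5B|$ into a lower bound on $|A+A|$. The claim about $\muconvex(n,s)$ will then follow quickly from the identity $|M(V)|=|V+V|-|V|$.

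First I would observe that equation~\eqref{eq:5} can be rewritten symmetrically as $x_1+\cdots+x_5 = x_6+x_6+x_6+x_6+x_6$, so it is a $5$-term sum equation and is preserved by any Freiman $5$-isomorphism. Apply Lemma~\ref{lemma:525} with $k=5$ to obtain a Freiman $5$-isomorphism $\phi\colon A\to B$ with $B\subseteq\Z$; then $|B|=|A|$, $|B+B|=|A+A|$ (since order $5$ implies order $2$), and $B$ also has no six distinct elements solving~\eqref{eq:5}. Next, apply Theorem~\ref{theorem:ruz2} with $k=5$ to $B$, obtaining $B'\subseteq B$ with $|B'|\geq |B|/25$ and a Freiman $5$-isomorphism $\psi\colon B'\to T\subseteq\{1,\dots,N\}$ where $N=2|5B-5B|$. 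The composite Freiman $5$-isomorphism from $T$ onto a subset of $A$ transports the no-distinct-solutions property to $T$, so Theorem~\ref{thm:schoen} yields $|T|\leq N/e^{C\log^{1/7}N}$ for some absolute $C>0$.

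Chaining these bounds gives $|A|/25 \leq 2|5B-5B|/e^{C\log^{1/7}(2|5B-5B|)}$. By the Pl\"unnecke--Ruzsa inequality, $|5B-5B|\leq (|B+B|/|B|)^{10}\,|B| = r^{10}|A|$, where $r:=|A+A|/|A|$. Substituting and taking logarithms yields $C\log^{1/7}(2r^{10}|A|)\leq 10\log r+\log 50$, and since the left-hand side is $\Omega(\log^{1/7}|A|)$ whenever $r\geq 1$, we conclude $\log r=\Omega(\log^{1/7}|A|)$, i.e.\ $|A+A|\geq |A|\,e^{\Omega(\log^{1/7}|A|)}$, proving the first inequality. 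For the $\muconvex$ statement, note that if $V\subseteq\R^n$ is in convex position and $x_1,\dots,x_6\in V$ are distinct, then $\tfrac15(x_1+\cdots+x_5)$ is a convex combination of $x_1,\dots,x_5$ and hence cannot equal the extreme point $x_6$; thus $V$ satisfies the hypothesis. Since $V\subseteq\tfrac12(V+V)$, the identity $|M(V)|=|V+V|-|V|$ combined with the first inequality gives $\muconvex(n,s)\geq s\bigl(e^{\Omega(\log^{1/7}s)}-1\bigr) = s\,e^{\Omega(\log^{1/7}s)}$. The main technical point is the careful coordination of Freiman isomorphism orders with the Pl\"unnecke--Ruzsa step: order $5$ is exactly enough to preserve~\eqref{eq:5} (and hence to apply Theorem~\ref{thm:schoen} to $T$), while Pl\"unnecke--Ruzsa is the one ingredient that relates the size of the ambient interval $\{1,\dots,N\}$ back to the doubling ratio $r$ without squandering the $\log^{1/7}$ factor.
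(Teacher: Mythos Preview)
Your proof is correct and follows essentially the same route as the paper's: pass to $\Z$ via Lemma~\ref{lemma:525}, then into a bounded interval via Theorem~\ref{theorem:ruz2} with $k=5$, apply Theorem~\ref{thm:schoen}, and close with the Pl\"unnecke--Ruzsa inequality $|5B-5B|\le (|B+B|/|B|)^{10}|B|$ together with $|5B-5B|\ge |B|$. You are slightly more explicit than the paper in two places: you keep track of the intermediate set $B\subseteq\Z$ (noting that $|B+B|=|A+A|$ since an order-$5$ isomorphism is in particular order~$2$), and you spell out why convex position rules out distinct solutions to~\eqref{eq:5} and how $|M(V)|=|V+V|-|V|$ yields the $\muconvex$ bound, which the paper leaves implicit.
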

\begin{proof}
Using Lemma~\ref{lemma:525} and Theorem~\ref{theorem:ruz2}, we see that there is a subset of $A$ that is Freiman $5$-isomorphic to a subset $A'$ of $\{1, \dots, 2 |5A  - 5A|\}$ of size at least $|A|/25$.   This follows from the fact that the composition of  Freiman $k$-isomorphisms is a Freiman $k$-isomorphism.   Since, trivially, $|A+A| \leq \frac{ |A+A|}{|A|}  |A|$, the Pl{\"u}nnecke-Ruzsa estimates~\cite[Corollary 6.29]{TV06} imply that 
$$
|5A - 5A| \leq \left( \frac{|A + A|}{|A|}\right)^{10} |A|.
$$
As mentioned before, since $A$ contains no solutions to \eqref{eq:5}  with distinct numbers, the set $A'$ contains no solutions to \eqref{eq:5} with no distinct integers. By Theorem~\ref{thm:schoen} we then see
that
\begin{align*}
\frac{|A|}{25} \leq |A'| &\leq \frac{2|5A - 5A|} {e^{ C'\log^{1/7} 2|5A - 5A|}}\\
&\leq \frac{\left( \frac{|A + A|}{|A|}\right)^{10} |A|}{ e^{C'\log^{1/7} 2|5A - 5A|}}\\
&\leq \frac{\left( \frac{|A + A|}{|A|}\right)^{10} |A|}{ e^{C'\log^{1/7} |A|}}.
\end{align*}
Here $C'$ is the constant from Theorem~\ref{thm:schoen}, and the last inequality follows from the fact that $|A| \leq |5A - 5A|$.  
Rearranging this gives the claimed proposition. 
\end{proof}

We are in position to prove the sublinear upper bound on $c(n,k)$.
\begin{theorem}\label{thm: sublinear upper bound}
For all $n,k\geq 1$,
$$
c(n,k) \leq \ell(n,k+1) \leq \frac{k}{e^{\Omega(\log^{1/7} k)}} \cdot 2^n.
$$
\end{theorem}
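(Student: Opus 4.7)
The plan is to prove the two inequalities separately. The first, $c(n,k) \leq \ell(n,k+1)$, is immediate from Lemma~\ref{lem:ell-bound}, which actually gives the strictly stronger statement $c(n,k) \leq \ell(n,k+1)-1$. For the second inequality I would chain Lemma~\ref{lem:mu-ell-bound} (applied with $k$ replaced by $k+1$), which reduces bounding $\ell(n,k+1)$ to bounding $s_{n,k+1}$, with Proposition~\ref{prop:sum17}, which supplies the nearly-linear lower bound on $\muconvex$. The entire content of the second inequality then amounts to inverting the function $s \mapsto s\,e^{c\log^{1/7} s}$ at the value $k+1$.

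Before invoking Proposition~\ref{prop:sum17} I would verify its hypothesis for a set $V$ in convex position: if $x_6 = \tfrac{1}{5}(x_1 + \cdots + x_5)$ for distinct $x_1, \ldots, x_6 \in V$, then $x_6$ is a convex combination of the five other points of $V$, contradicting $x_6$ being extreme. Hence Proposition~\ref{prop:sum17} yields an absolute constant $c > 0$ (independent of $n$) such that
$$\muconvex(n,s) \geq s\,e^{c\,\log^{1/7} s}$$
for all sufficiently large $s$.

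Set $s^\star \coloneqq \lceil (k+1)/e^{(c/2)\log^{1/7}(k+1)} \rceil$. I would then verify that $\muconvex(n,s^\star) \geq k+1$ for $k$ large, so that $s_{n,k+1} \leq s^\star$. Since $\log s^\star = (1 - o(1))\log(k+1)$, the elementary bound $(1-t)^{1/7} \geq 1 - t$ for $t \in [0,1]$ gives $\log^{1/7} s^\star \geq (1-o(1))\log^{1/7}(k+1)$, whence
$$s^\star\,e^{c\log^{1/7} s^\star} \geq \frac{k+1}{e^{(c/2)\log^{1/7}(k+1)}} \cdot e^{c(1-o(1))\log^{1/7}(k+1)} \geq k+1$$
once $k$ is large enough that the $o(1)$ loss is at most $1/2$. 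Combining with Lemma~\ref{lem:mu-ell-bound} then yields
$$\ell(n,k+1) \leq (s_{n,k+1} - 1)\,2^n + 1 \leq \frac{k+1}{e^{(c/2)\log^{1/7}(k+1)}}\cdot 2^n + O(1) = \frac{k}{e^{\Omega(\log^{1/7}k)}}\cdot 2^n,$$
the additive $O(1)$ and the shift from $k+1$ to $k$ being absorbed into the hidden constant.

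The main obstacle is the asymptotic inversion in the last step. It is routine but requires chasing constants carefully through the $\log^{1/7}$: the constant $c/2$ in the exponent of $s^\star$ must be chosen strictly smaller than $c$ so that, after the $(1-o(1))$ loss incurred when pulling $\log^{1/7}$ inside $\log s^\star$, the net exponent remains $\Omega(\log^{1/7} k)$. Everything else is a direct concatenation of the preceding lemmas.
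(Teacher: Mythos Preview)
Your proposal is correct and follows essentially the same route as the paper: invoke Lemma~\ref{lem:ell-bound}, then Lemma~\ref{lem:mu-ell-bound}, then Proposition~\ref{prop:sum17}, and invert $s\mapsto s\,e^{c\log^{1/7}s}$ at $k+1$. The paper's proof compresses the last step into a single line of inequalities, whereas you spell out the inversion by choosing $s^\star$ explicitly and checking the convexity hypothesis of Proposition~\ref{prop:sum17}; both of these are fine elaborations of what the paper leaves implicit.
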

\begin{proof}
Combining Lemma~\ref{lem:ell-bound} and Lemma~\ref{lem:mu-ell-bound}, we have that $c(n,k) \leq \ell(n,k+1)-1 \leq (s_{n,k+1} - 1)2^n$, where
\begin{align*}
s_{n,k+1} &= \min\{ s \st \muconvex(n,s)\geq k+1\} \\
&\leq \min \{ s \st s e^{ \Omega(\log^{1/7} s)} \geq k+1\}\\
&\leq \frac{k}{e^{\Omega(\log^{1/7} k)}},
\end{align*}
where the second to last inequality follows from Proposition~\ref{prop:sum17}.
\end{proof}

As pointed out by Bell~\cite{bell1977}, in dimension $n=2$, we can do much better.  Here is a proof based on a recent bound for the minimum area of a lattice $n$-gon that gives an explicit upper bound.
\begin{theorem}
\label{thm:c(2 k)}
$$
c(2,k) \leq \ell(2,k+1) \leq \lfloor{4.43 (k+4)^{\frac{1}{3}}}\rfloor.
$$
\end{theorem}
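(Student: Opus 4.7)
The first inequality $c(2,k) \leq \ell(2,k+1)$ is immediate from Lemma~\ref{lem:ell-bound}, so the heart of the proof is the bound on $\ell(2,k+1)$.

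Let $V \subseteq \Z^2$ be in convex position with $|V| = v$ and $|\iconv(V)\setminus V| \leq k$; by the definition of $\ell(2,k+1)$ it suffices to show that any such $v$ satisfies $v \leq \lfloor 4.43 (k+4)^{1/3}\rfloor - 1$. Set $P = \conv(V)$, and write $I$ and $B$ for the numbers of lattice points in the interior of $P$ and on its boundary, respectively. Because the elements of $V$ are exactly the $v$ vertices of $P$ (they are in convex position), the non-vertex lattice points of $P$ number $I + (B - v) \leq k$, so $I + B \leq v + k$, while $B \geq v$ since each vertex is a boundary lattice point. Pick's formula $A(P) = I + B/2 - 1$ therefore gives the upper bound
\[
A(P) \;=\; (I+B) - \tfrac{B}{2} - 1 \;\leq\; (v + k) - \tfrac{v}{2} - 1 \;=\; \tfrac{v}{2} + k - 1.
\]

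For the matching lower bound I would invoke the recent explicit version of Andrews' minimum-area theorem alluded to just before the statement: a concrete inequality of the form $A(P) \geq v^3 / C + \text{(linear/quadratic corrections)}$ for the appropriate explicit constant $C$ (with $C^{1/3} \approx 4.43$ after the corrections are accounted for). Combining the two area bounds yields a cubic inequality in $v$,
\[
\frac{v^3}{C} \;\leq\; \frac{v}{2} + k - 1 + \text{(lower-order corrections)},
\]
which after routine rearrangement produces $v^3 \leq 4.43^3\,(k + 4)$ once the linear term $v/2$ and the lower-order corrections from the area bound are absorbed into the shift $k \mapsto k+4$. Taking cube roots and the floor gives the desired estimate, and hence $\ell(2,k+1) \leq \lfloor 4.43(k+4)^{1/3}\rfloor$.

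The main obstacle is bookkeeping: identifying the precise recent explicit version of Andrews' theorem that delivers the constant $4.43$, and then verifying that the shift by $4$ is exactly sufficient to absorb both the linear contribution from Pick's theorem (the $v/2 - 1$ term) and the lower-order corrections in the area lower bound, uniformly in $k$. Small values of $v$ (below whatever threshold the explicit Andrews bound requires) must also be checked directly against the right-hand side, but they are easily dominated since even at $k = 0$ the bound $\lfloor 4.43 \cdot 4^{1/3}\rfloor = 7$ already exceeds the Doignon-Bell-Scarf value $\ell(2,1) = 5$.
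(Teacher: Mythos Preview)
Your approach is essentially identical to the paper's: Pick's theorem for the upper bound on the area and an explicit Andrews-type lower bound on the area of a convex lattice $v$-gon, combined into a cubic inequality in $v$ whose lower-order terms are absorbed into the shift $k\mapsto k+4$. The missing reference you flag is Rabinowitz's bound $A(P)\geq v^3/(8\pi^2)$; the paper then verifies the elementary inequality $v^3/4.43^3 - 4 \leq v^3/(8\pi^2) - v/2$ for all $v\geq 0$, which is exactly the bookkeeping step you anticipated.
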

\begin{proof}
Let $\ell =  4.43 (k+4)^{\frac{1}{3}}$.  We want to show that for any set $V \subseteq \Z^2$ in convex position, if $|V| \geq \ell$, then $|\iconv(V) \setminus V| \geq k+1$.  We will show the contrapositive: if $|\iconv(V) \setminus V| < k +1$ then $|V| < \ell$.   

Let $P = \conv(V)$.  
Let $v = |V|$ be the number of vertices of $P$, let $b$ be the number of lattice points on the boundary of $P$ that are not vertices, and let $i$ be the number of interior lattice points.  Therefore $i+b < k+1$.  By Pick's Theorem, the area $A$ of $P$ is given by $A = i + \frac{v + b}{2} - 1$.  
 By~\cite{rabinowitz1993}, $A \geq \frac{v^3}{8 \pi^2}$.  Hence
 $
  \frac{v^3}{8 \pi^2} \leq i + \frac{v + b}{2} - 1.
 $
After rearranging, we have
$
  \frac{v^3}{8 \pi^2}  - \frac{v}{2} \leq i + \frac{b}{2} - 1 < k.
$
One choice of a lower bound for $v \geq 0$ is $\frac{v^3}{4.43^3} - 4 \leq \frac{v^3}{8 \pi^2}  - \frac{v}{2} < k$.  This lower bound is most conveniently verified graphically and holds for $v \geq 0$.  The result follows now by rearranging to $v^3 < (4.43)^3 (k+4)$, applying a cube-root to both sides, and applying the floor operator since $v$ is a nonnegative integer.
\end{proof}
From the calculation above, we could also say that for every $\epsilon >0$ there exists a constant $C_\epsilon$ such that $\ell(n,k+1) \leq (8 \pi^2 + \epsilon)^\frac{1}{3} (k + C_\epsilon)^{\frac{1}{3}}$.  It was shown that for $A_v$, the minimum area of a convex lattice polygon with $v$ vertices, the limit $\lim_{v \to \infty} \tfrac{A_v}{v^3}$ and is suggested that the limit most likely very close to $0.0185067$~\cite[Page 2]{barany2004}.  Using this asymptotic result, the asymptotic behavior of $(8 \pi^2 + \epsilon)^\frac{1}{3} k^{\frac{1}{3}}$ could be improved further by a multiplicative factor.

\subsection{Tighter bounds for specified arrangements}
The dependence on $k$ of the upper bound is tight for $n=2$, but it is not tight for $n\geq 3$ as we know from the aforementioned bounds of Averkov et al.~\cite{averkov2016}.
The goal of this section is to achieve a smaller upper bound for specified arrangements of points.
For a set $X\subseteq \Z^n$, let $c(X)$ denote the smallest upper bound on the number of inequalities in any non-redundant system $Ax \leq b$ such that for $P=\{x\in\R^n\st Ax\leq b\}$ we have $P\cap \Z^n=X$.  That is,
\begin{equation}
\begin{aligned}
c(X) = \sup\{m \st & P \cap \Z^n = X \text{ for any } P = \{x \in \R^n \st Ax \leq b\}\\
&\text{with } A \in \R^{m\times n}, b \in \R^m, \text{ and } Ax\leq b \text{ non-redundant}\}.
\end{aligned}
\end{equation}
If no such system exists, then we define $c(X)=-\infty$. 
One can show that $c(X)$ is no larger than the Helly number of $S = \Z^n \setminus X$~\cite{averkov2012}, but these numbers do not always coincide. For instance, for $X = \Z \times\{0\} \subseteq \Z^2$, we have $c(X) = 2$, but the Helly number of $S = \Z^2 \setminus X$ is 6.

The quantities $c(X)$ and $c(n,k)$ are related as follows.
\begin{proposition}\label{prop: c(S) vs c(n,k)}
\[c(n,k) = \max_{\substack{X\subseteq\Z^n\\|X|=k}}c(X),\]
in particular $c(X)$ is finite whenever $X$ is finite.
\end{proposition}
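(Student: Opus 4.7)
The plan is to establish the equality by proving the two opposite inequalities $c(n,k) \geq c(X)$ for every $X \subseteq \Z^n$ with $|X|=k$, and $c(n,k) \leq \max_{|X|=k} c(X)$, and then to deduce finiteness from an already known upper bound on $c(n,k)$ such as Bell's bound $c(n,k) \leq (k+2)^n$ or Theorem~\ref{thm: sublinear upper bound}.

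For the direction $c(n,k) \geq c(X)$, fix $X \subseteq \Z^n$ with $|X|=k$ and take any non-redundant system $Ax \leq b$ with $m$ rows whose polyhedron $P$ satisfies $P \cap \Z^n = X$. Since $P$ has exactly $k$ integer points, the defining property of $c(n,k)$ produces a subset $I$ of rows with $|I| \leq c(n,k)$ such that $\{x \st A_I x \leq b_I\} \cap \Z^n = X$. If $I$ were a proper subset, then picking any $i \notin I$ and invoking non-redundancy shows that the system obtained by deleting row $i$ alone already admits strictly more than $k$ integer solutions; enlarging the polyhedron further by dropping the remaining rows outside $I$ cannot remove those extra integer points, contradicting $\{x \st A_I x \leq b_I\} \cap \Z^n = X$. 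Hence $I$ is the full row set, so $m \leq c(n,k)$, and taking the supremum over $m$ yields $c(X) \leq c(n,k)$.

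For the direction $c(n,k) \leq \max_{|X|=k} c(X)$, start from an arbitrary $P = \{x \st Ax \leq b\}$ with $|P \cap \Z^n|=k$, set $X = P \cap \Z^n$, and iteratively discard any single inequality whose removal does not enlarge the integer point set. The process terminates at a non-redundant sub-system $A_I x \leq b_I$ whose polyhedron still meets $\Z^n$ exactly in $X$, and by the definition of $c(X)$ the number of its rows is bounded by $c(X) \leq \max_{|X'|=k} c(X')$. Thus every polyhedron with $k$ integer points admits a sub-system of at most $\max_{|X|=k} c(X)$ inequalities preserving the integer points, and since $c(n,k)$ is by definition the smallest such bound we obtain $c(n,k) \leq \max_{|X|=k} c(X)$.

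Finiteness then follows immediately: since $c(n,k) < \infty$, the inequality $c(X) \leq c(n,|X|)$ established above gives $c(X) < \infty$ for every finite $X$. The whole argument is essentially bookkeeping rather than a technical obstacle; the one point to watch is the matching of definitions, because ``non-redundant'' in $c(X)$ forbids removing any single inequality without strictly enlarging the integer hull, and this is precisely the property exploited both to rule out a proper sub-system $I$ in the first direction and to certify termination of the elimination procedure in the second.
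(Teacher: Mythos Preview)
Your argument is correct. The paper states this proposition without proof, treating it as an immediate consequence of the definitions of $c(n,k)$ and $c(X)$; your write-up is precisely the natural unpacking of those definitions, and the two directions you give (any non-redundant system realizing $X$ must coincide with the subsystem guaranteed by $c(n,k)$, and conversely any polyhedron can be pruned to a non-redundant one counted by $c(X)$) are exactly what the authors have in mind.
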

If we allow $X$ to be infinite, then we can have $c(X) = \infty$.  For instance, let $X = \{ (x,y) \in \Z^2 :  \tfrac{1}{2} \leq y, \sqrt{2} y \leq x\}$.

\renewcommand{\d}{\mathrm{rc}}

The two main theorems of this section are Theorem~\ref{thm: subspaces} and Theorem~\ref{thm: 2d S-critical bound}.
The values for both bounds can be substantially lower than $c(n,k)$.
The first theorem says that if $X$, with $|X|=k$, lies in a $d$-dimensional subspace, then $c(X)\leq c(d,|X|)+2(2^n-1)$.  
This upper bound is sharp, and equal to $2(2^n-1)$, for any set of collinear points, which gives an alternative proof that $c(n,2)\leq 2(2^n-1)$ as every two points are collinear.
The bound for the case of collinear points is used in our proof that $c(2,5)\leq 7$.

The second theorem, Theorem~\ref{thm: 2d S-critical bound},  implies that for $X \subseteq \Z^n$ finite, $c(X)\leq \d(X)(2^n-1)$, where $\d(X)$ denotes the minimum number of inequalities in any system whose integer solutions are exactly $X$, or $-\infty$ if there is no such system.  The quantity $\d(X)$, known as the \emph{relaxation complexity} of $X$, was studied in~\cite{Kaibel2015}.  A similar quantity was studied earlier in~\cite{JEROSLOW1975119}.

From the definition of $\d(X)$, we have $\d(X) \leq c(X)$.  Thus, for any $n\geq 1$, $k\geq 0$, we can bound $c(n,k)$ as 
\[\max_{|X|=k}\d(X)\leq \max_{|X|=k}c(X)=c(n,k)\leq (2^{n}-1)\max_{|X|=k}\d(X).\] 

\begin{theorem}\label{thm: subspaces}
Let $1\leq d\leq n$ and $X\subseteq\Z^n$. If $\iconv(X)=X$ and $X$ is contained in a $d$-dimensional affine subspace of $\R^n$, then $c(X) \leq c(d,|X|) + 2(2^n-2)$.
\end{theorem}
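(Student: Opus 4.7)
The plan is to use Lemma~\ref{lem: one point per facet} to reduce the statement to a counting problem about a set $V\subseteq\Z^n$ in convex position with $\iconv(V)\setminus V\subseteq X$, and then to bound $|V|$ by splitting between points on $H$ and points off $H$. Starting from any non-redundant system of $m$ inequalities cutting out $X$, the lemma produces such a $V$ with $|V|=m$; I would write $V=V_H\sqcup V_{\bar H}$ with $V_H=V\cap H$ and $V_{\bar H}=V\setminus H$ and estimate the two pieces separately.

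For $V_H$ I would work inside the $d$-dimensional affine subspace $H$. The lattice $L=H\cap\Z^n$ is the translate of the primitive rank-$d$ sublattice $L_0=(H-H)\cap\Z^n$ and is therefore affinely isomorphic to $\Z^d$; under this identification $V_H$ is a convex-position set in $\R^d$ satisfying $\iconv_H(V_H)\setminus V_H\subseteq X$, so it witnesses a non-redundant system in $\R^d$ whose integer solutions lie inside $X$. Combining this with the monotonicity of $\ell(d,\cdot)$ and Lemma~\ref{lem:ell-bound} would yield $|V_H|\leq c(d,|X|)$. For $V_{\bar H}$ I would run a midpoint/parity argument. If $v,v'\in V_{\bar H}$ are distinct and share a parity class $\alpha\in\Z^n/2\Z^n$, then $(v+v')/2$ is an integer point of $\iconv(V)$ that by convex position of $V$ cannot lie in $V$ (the midpoint of two distinct vertices is not a vertex), so it lies in $X\subseteq H$; hence $v+v'\in 2H$. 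Writing the analogous relation for three distinct same-parity points $v_0,v_1,v_2\in V_{\bar H}$ gives $v_0=m_{01}+m_{02}-m_{12}$, an affine combination of points of $H$ with coefficient sum~$1$, forcing $v_0\in H$ and contradicting $v_0\in V_{\bar H}$; hence $|V_{\bar H,\alpha}|\leq 2$. The analogous computation for $v\in V_{\bar H}$ and $w\in V_H\cup X$ of the same parity would force $v\in H$, so the parities occurring in $V_{\bar H}$ are disjoint from those occurring in $V_H\cup X$. Because $\iconv(X)=X$ and $d\geq 1$, in the non-trivial case $|X|\geq 2$ the integer points on a primitive-direction segment between two points of $X$ all lie in $X$ and alternate in parity, so $X$ realises at least two parity classes; this excludes at least two parities from $V_{\bar H}$, giving $|V_{\bar H}|\leq 2(2^n-2)$. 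Summing the two bounds proves the theorem.

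The hard part will be the $V_H$ step, because $c(d,\cdot)$ is non-monotone in its second argument (as the paper exhibits later with $c(2,5)<c(2,4)$); so the estimate $|V_H|\leq c(d,|X|)$ does not follow directly from $|V_H|\leq c(d,|\iconv_H(V_H)\setminus V_H|)$, and the cleanest route I see is to first bound $|V_H|$ by the monotone surrogate $\ell(d,|X|+1)-1$ and then invoke Lemma~\ref{lem:ell-bound} (or a sharpened variant) to return to $c(d,|X|)$. The edge case $|X|=1$, where $X$ realises only one parity class, is absorbed by the observation that the theorem's bound then already exceeds the known estimate $c(n,1)\leq 2(2^n-1)\geq c(X)$, requiring no separate argument.
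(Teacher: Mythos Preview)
Your treatment of the off-subspace part $V_{\bar H}$ is correct and in fact uses a different decomposition from the paper: the paper picks a \emph{hyperplane} $H\supseteq L$ meeting no integer points outside $L$, splits $V\setminus L$ into the two open half-spaces, and shows each side has at most $2^n-p$ points (at most one per parity class, disjoint from the $p\geq 2$ parities of $X$). Your three-point affine trick works directly with the $d$-dimensional subspace, allows two points per parity class off $H$, and reaches the same bound $2(2^n-2)$. Either route is fine; yours is arguably more elegant, though note that the case $w\in X$ in your disjointness step needs the full conclusion of Lemma~\ref{lem: one point per facet} (namely $X\subseteq\intr(P')$), not merely $\iconv(V)\setminus V\subseteq X$, since $X$ need not lie in $\conv(V)$.

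The $V_H$ step, however, has a genuine gap, and it is exactly the one you flagged. From $\iconv_H(V_H)\setminus V_H\subseteq X$ you only get $|V_H|\leq \ell(d,|X|+1)-1$. Lemma~\ref{lem:ell-bound} says $c(d,|X|)\leq \ell(d,|X|+1)-1$, which points the \emph{wrong} way: there is no ``sharpened variant'' that reverses it. Concretely, the octagon in Figure~\ref{fig:alpha-examples} gives $\ell(2,6)-1\geq \alpha(2,4)\geq 8>7=c(2,5)$, so the surrogate bound $\ell(d,|X|+1)-1$ can strictly exceed $c(d,|X|)$, and your chain of inequalities cannot close.

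The paper avoids this by not throwing away $P$. It intersects the original maximum non-redundant polytope $P$ with the rational $d$-subspace $L$; then $P\cap L$ is a polyhedron in $L\cong\R^d$ whose integer points are \emph{exactly} $X$, so the definition of $c(d,|X|)$ applies directly. For each $v_i\in V\cap L$, dropping the $i$th inequality from the system defining $P\cap L$ admits $v_i$ (an integer point of $L$ outside $X$), so those $|V\cap L|$ inequalities are non-redundant for $P\cap L$; hence $|V\cap L|\leq c(d,|X|)$. The point is that the number of integer solutions is pinned at exactly $k=|X|$ by working with $P\cap L$, whereas your $V_H$ only records a set with \emph{at most} $k$ interior lattice points, which the non-monotonicity of $c(d,\cdot)$ punishes.
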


\begin{proof}
  Let $k=|X|$.
  The Doignon-Bell-Scarf Theorem covers the case $k=0$ and \eqref{eq: qdbs bound} covers $k=1$.  
Henceforth, let $k\geq 2$. 
Without loss of generality we may assume that $0\in X$, and hence any affine subspace containing $X$ is in fact a linear subspace.

Suppose that $P=\{x\in\R^n\st Ax\leq b\}$ is a polytope defined by a maximum non-redundant system of $c(X)$ inequalities with $P\cap\Z^n=X$ and let $V$ be given by Lemma~\ref{lem: one point per facet}.
Let $L\subseteq \R^n$ be a $d$-dimensional rational linear subspace containing $X$.
There exists a hyperplane $H$ such that $L\subseteq H$ and $H\cap(\Z^n\setminus L)=\emptyset$, since a generic hyperplane containing $L$ has this property.
Let $H_0$ and $H_1$ denote the closed halfspaces with $H_0\cap H_1=H$.
Notice that no two points in $V\cap \intr(H_0)$ can have the same parity nor can a point here have the same parity as a point in $X$ because their midpoint would not lie in $L$.
The same holds for $V\cap\intr(H_1)$.
Therefore, if $p$ is the number of distinct parities of points in $X$ then both $|V\cap \intr(H_0)|$ and $|V\cap \intr(H_1)|$ are at most $2^n-p$.

The points in $X$ have at least two distinct parities, i.e.~$p\geq 2$.
To see this, suppose $a,b\in X$ are distinct points of the same parity.
By assumption $\iconv(X)=X$, so every integer point on the line segment connecting $a$ and $b$ also lies in $X$.  Neighboring points on the line segment cannot be of the same parity, so there is at least one additional point on the segment that has a different parity from $a$ and $b$.
We have established that both $|V\cap \intr(H_0)|$ and $|V\cap \intr(H_1)|$ are at most $2^n-2$.

It remains to bound $|V\cap H|$.
Our condition that $H\cap(\Z^n\setminus L)=\emptyset$ implies that $|V\cap H|=|V\cap L|$.
Since $L$ is a $d$-dimensional rational linear subspace, one can identify $L$ with $\R^d$ and $L\cap\Z^n$ with $\Z^d$.
The number of non-redundant facets of $P\cap L$, seen as a polytope in $L$, is at most $c(d,k)$, by definition of $c(d,k)$ and at least $|V\cap L|$, by our application above of Lemma~\ref{lem: one point per facet}.
Thus we have,
\[c(X)=|V| = |V\cap L|+|V\cap \intr(H_0)| + |V\cap\intr(H_1)| \leq c(d,k) + 2(2^n-2).\]
\end{proof}
As any $k$ points in $\R^n$ are contained in a $(k-1)$-dimensional affine subspace, a consequence of Theorem~\ref{thm: subspaces} is the following corollary.

\begin{corollary}\label{cor: large n}
If $1\leq k\leq n$, then $2^{n+1} - k\leq c(n,k)\leq c(k-1,k)+2(2^n-2)$.  
\end{corollary}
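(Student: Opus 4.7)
The plan is to prove the upper and lower bounds separately, treating $k\geq 2$ (the case $k=1$ being covered by the earlier Doignon-Bell-Scarf-type bound $c(n,1) \leq 2(2^n-1)$ together with a standard hexagon-style construction). For the upper bound I would invoke Proposition~\ref{prop: c(S) vs c(n,k)} to rewrite $c(n,k) = \max_{|X|=k} c(X)$, observing that only $X$ with $\iconv(X) = X$ contribute (otherwise $c(X) = -\infty$). Since any $k$ integer points lie in an affine subspace of $\R^n$ of dimension at most $k-1$, Theorem~\ref{thm: subspaces} applied with $d = k-1$ immediately gives $c(X) \leq c(k-1, k) + 2(2^n-2)$ for every such $X$, and taking the maximum yields the upper bound.

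For the lower bound I would proceed by induction on $k\geq 2$, with base case $c(n,2) \geq 2(2^n-1) = 2^{n+1}-2$ supplied by Proposition~\ref{prop: c(n,2) lower bound}. The inductive step reduces to the inequality $c(n,k+1) \geq c(n,k) - 1$. Take a non-redundant polytope $P = \{x : a_i\cdot x \leq b_i,\ i = 1,\ldots,m\}$ attaining $m = c(n,k)$ with $|P \cap \Z^n| = k$; each facet $F_i$ admits an integer witness $w_i$ with $a_i\cdot w_i > b_i$ and $a_j\cdot w_i \leq b_j$ for $j\neq i$. After a small perturbation (justified by Lemma~\ref{lem:q-linear-independence}), I may assume the coordinates of $a_1$ are $\mathbb{Q}$-linearly independent and that no integer point lies exactly on any facet, keeping $P \cap \Z^n$ unchanged. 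I then relax the first inequality to $a_1\cdot x \leq b_1 + \epsilon$, with $\epsilon$ chosen just larger than $a_1\cdot w_1 - b_1$ but strictly smaller than the excess of the next outside integer point in the $a_1$-direction, so that the resulting polytope $P''$ satisfies $P'' \cap \Z^n = (P \cap \Z^n) \cup \{w_1\}$. Each facet $F_i$ with $i\geq 2$ remains non-redundant in $P''$: the witness $w_i$ fulfils every inequality of $P''$ except the $i$-th, using $a_1\cdot w_i \leq b_1 < b_1+\epsilon$. Hence $P''$ has $k+1$ integer points and at least $m-1 = c(n,k)-1$ non-redundant facets (the relaxed facet may become redundant). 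Iterating yields $c(n,k) \geq c(n,2) - (k-2) \geq 2^{n+1}-k$ for $2\leq k\leq n$.

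The main technical obstacle is guaranteeing that the relaxation of $F_1$ admits exactly one integer point. Without genericity, several outside integer points could share the same $a_1$-value and be admitted simultaneously, advancing $k$ by more than one per step and breaking the clean inductive bookkeeping. The $\mathbb{Q}$-linear-independence supplied by Lemma~\ref{lem:q-linear-independence} remedies this by ensuring that all values $a_1\cdot z$ for $z\in\Z^n$ are distinct, so $\epsilon$ can be placed between the $a_1$-excesses of the first and second closest outside integer points, admitting only $w_1$ and delivering the inductive step as stated.
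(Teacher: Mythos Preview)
Your approach is essentially the paper's: the upper bound is the one-line application of Theorem~\ref{thm: subspaces} (any $k$ points sit in a $(k{-}1)$-dimensional affine subspace), and for the lower bound the paper likewise iterates $c(n,k)\geq c(n,k-1)-1$ from the base $c(n,2)=2(2^n-1)$, simply citing Lemma~\ref{lem:inductive} rather than reproving it. Your relax-one-facet construction is a harmless variant of the paper's proof of Lemma~\ref{lem:inductive} (which instead contracts all facets but one of the bounded polytope $P'$ supplied by Lemma~\ref{lem: one point per facet}, thereby sidestepping your perturbation and boundedness discussion); since that lemma is already available you could have invoked it directly and dropped the technical paragraph.
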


The corollary comes with a lower bound, too, because $c(n,k)\geq c(n,k-1)-1$ by Lemma~\ref{lem:inductive}.
In particular, $|c(n,k)-2^{n+1}|= O(1)$, for each fixed $k$.

\begin{theorem}\label{thm: 2d S-critical bound}
Let $X\subseteq \Z^n$ be a set of points such that $X = \iconv(X)$ and let $Q=\{x\in\R^n\st a_i \cdot x\leq b_i, i=1, \dots, q\}$ for some $a_1, \dots, a_q \in \R^n$, $b \in \R^q$ be any polyhedron with $Q\cap \Z^n=X$.
Then $c(X)\leq q(2^n-1)$.  
\end{theorem}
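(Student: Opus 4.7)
The strategy is to apply Lemma~\ref{lem: one point per facet} to a maximum non-redundant polyhedron $P$ with $P\cap\Z^n=X$ and $c(X)$ facets. This yields a polytope $P'\supseteq P$ and a set $V=(P'\setminus P)\cap\Z^n$ in convex position with $|V|=c(X)$ and $\iconv(V)\setminus V\subseteq X$. Since $V\cap X=\emptyset$ and $X=Q\cap\Z^n$, every $v\in V$ violates at least one of the $q$ inequalities of $Q$; letting $V_i:=\{v\in V:a_i\cdot v>b_i\}$, we have $|V|\leq\sum_{i=1}^{q}|V_i|$, so the theorem reduces to proving $|V_i|\leq 2^n-1$ for each $i$.

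A standard parity/midpoint argument first shows that $V_i$ contains at most one point per parity class of $\Z^n/2\Z^n$, giving the weaker bound $|V_i|\leq 2^n$. For distinct $v_1,v_2\in V_i$ with $v_1\equiv v_2\pmod 2$, the midpoint $m=(v_1+v_2)/2$ is integer, lies strictly between two vertices of $\conv(V)$, and therefore $m\in\iconv(V)\setminus V\subseteq X\subseteq Q$; thus $a_i\cdot m\leq b_i$, contradicting $a_i\cdot m=(a_i\cdot v_1+a_i\cdot v_2)/2>b_i$.

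To sharpen to $|V_i|\leq 2^n-1$, I would exclude one parity class from $V_i$. Assuming $X\cap\conv(V)\neq\emptyset$, choose $x_0\in\argmax\{a_i\cdot x:x\in X\cap\conv(V)\}$ and suppose for contradiction that some $v_0\in V_i$ satisfies $v_0\equiv x_0\pmod 2$. Then $m=(v_0+x_0)/2$ is an integer point in $\conv(V)$. Writing $x_0=\sum_j\lambda_j v_j$ as a convex combination of $V$ with $\lambda_{j'}>0$ for some $v_{j'}\neq v_0$ (such $j'$ exists since $x_0\neq v_0$), the identity $m=\tfrac12 v_0+\tfrac12\sum_j\lambda_j v_j$ exhibits $m$ as a convex combination of $V$ with positive weight on both $v_0$ and $v_{j'}$, so $m$ is not a vertex and $m\notin V$. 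Consequently $m\in\iconv(V)\setminus V\subseteq X\cap\conv(V)$, whence $a_i\cdot m\leq a_i\cdot x_0$; yet $a_i\cdot m=(a_i\cdot v_0+a_i\cdot x_0)/2>(b_i+a_i\cdot x_0)/2\geq a_i\cdot x_0$ by $a_i\cdot v_0>b_i\geq a_i\cdot x_0$, the desired contradiction.

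The main obstacle is the degenerate case $X\cap\conv(V)=\emptyset$, where the above argmax is empty. In that situation $\iconv(V)\setminus V\subseteq X\cap\conv(V)=\emptyset$, so $\iconv(V)=V$ and $V$ is itself an empty lattice polytope with $|V|\leq 2^n$ by Doignon--Bell--Scarf; this is absorbed by $q(2^n-1)\geq 2(2^n-1)\geq 2^n$, using $q\geq 2$, which holds whenever $Q$ has a finite integer-point set. Summing $|V_i|\leq 2^n-1$ over $i$ in the main case then yields $c(X)=|V|\leq q(2^n-1)$ as desired.
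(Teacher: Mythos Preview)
Your argument is correct and shares the paper's overall structure: apply Lemma~\ref{lem: one point per facet}, cover $V$ by the sets $V_i=\{v\in V:a_i\cdot v>b_i\}$, and show $|V_i|\leq 2^n-1$ for each $i$. The difference is only in how that last step is executed. The paper picks $u_i\in\argmax\{a_i\cdot x:x\in V\cup X,\ a_i\cdot x\leq b_i\}$ and argues that $\bar V_i=V_i\cup\{u_i\}$ is in convex position with $\iconv(\bar V_i)=\bar V_i$; then $|\bar V_i|\leq 2^n$ follows from $\ell(n,1)=2^n+1$. You instead run the parity/midpoint argument explicitly on $V_i$ and then exclude one further parity class via a maximizer $x_0\in X\cap\conv(V)$. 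Both are really the same mechanism dressed differently.

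Your restriction of $x_0$ to $X\cap\conv(V)$ is what creates the degenerate case, and it is unnecessary. Take instead $x_0\in\argmax\{a_i\cdot x:x\in X\}$ over all of $X$ (nonempty once $X\neq\emptyset$). If $v_0\in V_i$ shares the parity of $x_0$, then since $x_0\in\intr(P')$ and $v_0\in P'$, the midpoint $m=(v_0+x_0)/2$ lies in $\intr(P')$ by convexity, hence $m\in\intr(P')\cap\Z^n=X$, and the same contradiction $a_i\cdot m>a_i\cdot x_0$ goes through without ever needing $x_0\in\conv(V)$. This is essentially why the paper's choice of $u_i$ from $V\cup X$ avoids any side case. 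Your handling of the degenerate case is fine as written, but note that the justification ``$q\geq 2$ whenever $Q$ has a finite integer-point set'' fails when $X=\emptyset$ and $Q$ is described by the single infeasible inequality $0\cdot x\leq -1$; the paper sidesteps this by treating $X=\emptyset$ separately via Doignon--Bell--Scarf.
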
 
\begin{proof}
The statement is implied by the Doignon-Bell-Scarf Theorem if $X=\emptyset$, so suppose $X\neq\emptyset$.
Let $P$ be a maximum non-redundant polyhedron such that $P \cap \Z^n = X$ and let $P'$ and $V$ be given by applying Lemma~\ref{lem: one point per facet} to $P$.  Then $V \subseteq \Z^n$ with $|V| = c(X)$ and each facet of $P'$ contains a unique $v \in V$ in its relative interior.  Then $X \cap V = \emptyset$, and hence $Q \cap V = \emptyset$.  Thus, $V \subseteq (\R^n \setminus Q)$.  Since $\R^n \setminus Q = \cup_{i=1, \dots, q} \{x \st a_i \cdot x > b_i\}$, we have that 
$$
V = \bigcup_{i=1}^q V_i, \ \text{ where } \ V_i = V \cap \{x \st a_i \cdot x > b_i\} \text{ for } i=1, \dots, q.
$$
For each $i=1, \dots, q$, let $u_i \in \argmax\{ a_i \cdot x \st a_i \cdot x \leq b_i, x \in V \cup X\}$.  Since $X\neq\emptyset$ and $X \subseteq Q$, there exists such a $u_i$.  Furthermore, by choice of $u_i$ and $V_i$, we have that $\bar V_i = V_i \cup \{u_i\}$ is in convex position and $\iconv(\bar V_i) \setminus \bar V_i = \emptyset$.  This is because $V_i$ is a subset of the integer points on the boundary of $P'$ and $u_i \in P'$ chosen to not be contained in $\iconv(V_i)$.  But since $\ell(n,1) = 2^n + 1$, we must have $|\bar V_i| \leq 2^n $ and hence $|V_i| \leq 2^n -1$ for each $i=1, \dots, q$.  Finally,
$$
c(X) = |V| = \left|\bigcup_{i=1}^q V_i\right| \leq \sum_{i=1}^q |V_i| \leq q(2^n -1).
$$
\end{proof}

\section{Asymptotic Lower Bounds}\label{sec: lower bounds}

In this section, we prove that $c(n,k)$ is at least as large as the maximum number of vertices of any lattice polytope with $k$ non-vertex integer points.  This result was also shown recently in~\cite[Theorem 2]{averkov2016}.
It follows that exhibiting such a polytope with many vertices gives a lower bound for $c(n,k)$. 
We use the integer hull of the $n$-dimensional ball.
The result is a lower bound for $c(n,k)$ whenever there exists $r>0$ so that $k$ is the number of non-vertex lattice points in the integer hull of the ball with radius $r$.
We do not know if every $k\in\N$ can be achieved in this way, for example by a translation of the ball, so this leads us to a lower bound on $c(n,k_r)$ only for a sequence of values $k_r\to\infty$. 
The problem is compounded by the fact that $c(n,k)$ may decrease as $k$ increases, which makes it difficult to extend the bound from the sequence $k_r$ to all $k\geq 0$.
Fortunately, as the next lemma shows, the decrease is modest.
This is enough to fill in the gaps between consecutive values $k_r$.

The following lemma is implied by~\cite[Theorem 3]{averkov2016} applied for $S = \Z^n$.  
\label{sec:lower-bounds}
\begin{lemma}
\label{lem:inductive}
For all $k,n\geq1$, $c(n,k)\geq c(n,k-1)-1$.
\end{lemma}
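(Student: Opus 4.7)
The plan is to take a maximum non-redundant polyhedron realising $c(n,k-1)$ and enlarge it by exactly one integer point while preserving non-redundancy of all but at most one of its facets. More precisely, let $P$ be a maximum non-redundant polyhedron with $X\coloneqq P\cap\Z^n$ satisfying $|X|=k-1$ and with $m\coloneqq c(n,k-1)$ non-redundant inequalities. Applying Lemma~\ref{lem: one point per facet} to $P$ produces a bounded polytope
\[P'=\{x\in\R^n:a_i'\cdot x\leq b_i',\; i=1,\ldots,m\}\]
such that $\intr(P')\cap\Z^n=X$, and the set $V=\{v_1,\ldots,v_m\}=(P'\setminus P)\cap\Z^n$ has $v_i$ as the unique integer point in the relative interior of facet $i$ of $P'$.

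Next I would tighten every inequality except the first by a tiny amount. Define
\[Q\coloneqq\{x\in\R^n:a_1'\cdot x\leq b_1',\; a_i'\cdot x\leq b_i'-\delta_i \text{ for } i=2,\ldots,m\},\]
choosing each $\delta_i>0$ small enough that every integer point in $P'$ other than $v_i$ still satisfies $a_i'\cdot x\leq b_i'-\delta_i$. Such $\delta_i$ exist because every point of $(X\cup V)\setminus\{v_i\}$ satisfies $a_i'\cdot x<b_i'$ strictly: the points of $X$ lie in $\intr(P')$, and each $v_j$ with $j\neq i$ sits on facet $j$ only, not on facet $i$. On the other hand, $a_i'\cdot v_i=b_i'$, so $v_i\notin Q$. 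Since $Q\subseteq P'$ and $P'\cap\Z^n=X\cup V$, the polytope $Q$ satisfies $Q\cap\Z^n=X\cup\{v_1\}$, a set of size $k$.

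The final step is to check non-redundancy of the inequalities of $Q$. For each $i\geq 2$, the integer point $v_i$ still satisfies every inequality of $Q$ except the $i$-th (which it violates by construction), so $v_i$ witnesses that inequality $i$ of $Q$ cannot be dropped without adding an integer point. The inequality $a_1'\cdot x\leq b_1'$ may or may not be non-redundant in $Q$: in the first case we obtain $c(n,k)\geq m$ directly, whereas in the second, removing it yields a non-redundant polyhedron with $m-1$ inequalities and integer hull $X\cup\{v_1\}$, giving $c(n,k)\geq m-1=c(n,k-1)-1$.

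The only delicate point is the simultaneous selection of the $\delta_i$'s, but the clean separation guaranteed by Lemma~\ref{lem: one point per facet} (each $v_i$ lies on facet $i$ only, and $X$ lies in the interior of $P'$) supplies the positive slack needed to make all these choices at once.
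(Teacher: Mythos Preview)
Your argument is correct and follows essentially the same route as the paper's proof: start from a maximum non-redundant polyhedron realising $c(n,k-1)$, pass to the polytope $P'$ of Lemma~\ref{lem: one point per facet}, and then tighten all facets except the first so that exactly the single facet point $v_1$ is absorbed while each $v_i$ with $i\geq 2$ remains a witness to the non-redundancy of inequality $i$. The paper uses a single uniform $\epsilon$ in place of your individual $\delta_i$'s and omits your case distinction on whether inequality~$1$ stays non-redundant, but these are cosmetic differences only.
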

\begin{proof}
Let $P=\{x\in\R^n\st Ax\leq b\}$ be maximum non-redundant polyhedron with $c(n,k-1)$ inequalities and containing $k-1$ lattice points.  Let $P' = \{ x \in \R^n \st A'x \leq b'\}$ the polytope as given in Lemma~\ref{lem: one point per facet}.  Then $\intr(P') \cap\Z^n = k-1$ and each of the $c(n,k-1)$ facets of $P'$ has an integer point on its relative interior.  Thus, there exists an $\epsilon >0$ such that $P'' = \{ x \in \R^n \st a_1' \cdot x \leq b'_1, a_i' \cdot x \leq b_i'-\epsilon \text{ for all } i =2, \dots, c(n,k-1)\}$ contains exactly $k$ lattice points.  By construction, there are at least $c(n,k-1) -1$ inequalities, those numbered $2,3,\ldots, c(n,k-1)$, that cannot be removed from $P''$ without increasing the number of integer points in $P''$.  Hence, $c(n,k) \geq c(n,k-1) -1$.
\end{proof}

As with the upper bound, our lower bound is proved by considering integer points in convex position.
\begin{definition} 
Let
\begin{equation}\label{eq: alpha(n,k)}
\alpha(n,k) \coloneqq \max\left\{|V| \st V\subseteq \mathbb{Z}^n\text{ in convex position},
 |\iconv(V)\setminus V|=k\right\}.
\end{equation}
\end{definition}
In other words, $\alpha(n,k)$ is the maximum number of vertices of a lattice polytope that contains exactly $k$ non-vertex lattice points.  As mentioned before, an alternative definition of $\ell(n,k)$ is that $\ell(n,k+1)-1$ is the maximum number of vertices of a lattice polytope $P \subseteq \R^n$ having at most $k$ non-vertex lattice points.  Thus, 
$$
\ell(n,k+1) - 1 = \max_{i=0, \dots, k} g(n,k).
$$
The quantity $g(n,k)$ was studied in~\cite{averkov2016} denoted as $g(\Z^n, k)$.  

A related quantity has been studied by Averkov~\cite{averkov2013}.  Let $S$ be a discrete subset of $\R^n$ and let $g(S)$ denote the maximum number of vertices of a polytope $P$ with vertices in $S$ 
such that $P$ contains no other points in $S$. The number $g(S)$ is equal to the Helly number of $S$ since $S$ is discrete~\cite[Lemma 2.2]{deloera2015}.  See also~\cite{amenta2015helly, hoffman79, averkov2013}.  Clearly $\alpha(n,k)  \leq \max\{ g(\Z^n \setminus V) \st V \subseteq \Z^n,  |V| = k,  \iconv(V)  = V\}$, but it is unclear if equality always holds. 

Some lower bounds for $\alpha(2,k)$ for specific values of $k$ can be directly derived from examples in~\cite{castryck2012}.  Furthermore,~\cite{averkov2016} uses the database available in~\cite{castryck2012} to exactly compute values for $\alpha(2,k)$ for all $k \in \{0,\dots, 30\}$.  Castryck~\cite{castryck2012} studies 
 the number of interior lattice points for convex lattice $n$-gons in the plane.   They define the genus $\mathbb{g}(n)$ as the minimum number of interior lattice points of a lattice $n$-gon.  They compute $\mathbb{g}(n)$ for $n=1, \dots, 30$ and also provide related information such as the number of equivalence classes up to lattice invariant transformations.  Although the computations done by Castryck are with respect to interior lattice points, many of their examples have no boundary lattice points that are not vertices.  Therefore, these examples directly provide lower bounds on the values of $\alpha(2,k)$ and hence $c(2,k)$.  
For example, 
$\alpha(2,17) \geq 11$, $\alpha(2,45) \geq 15$, $\alpha(2,72) \geq 17$, and $\alpha(2,105) \geq 19$.

Our strategy is to prove that $c(n,k)\geq\alpha(n,k)$ and then use known properties of the integer hull of the Euclidean ball to prove a lower bound on $\alpha(n,k)$.
The first lemma in this direction is the following.  

Let $\| \cdot \|$ denote the standard Euclidean norm.

\begin{lemma}\label{lem: alpha P}
Let $k\in\Z_{\geq0}$ and $V\subseteq \Z^n$ be a maximizer of~\eqref{eq: alpha(n,k)}.
There exists a polyhedron $P=\{x\in\R^n\st Ax\leq b\}$ with $|V|$ facets such that \textup{(a)} each point of $V$ is contained in the relative interior of a different facet of $P$ and \textup{(b)} $P\cap\Z^n = \iconv(V)$.
\end{lemma}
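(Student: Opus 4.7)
The plan is to construct $P$ by slightly rotating the vertex-supporting hyperplanes of $Q\coloneqq \conv(V)$ so that each vertex of $Q$ becomes interior to a new facet. Since $V$ is in convex position, it is exactly the vertex set of $Q$, and $Q \cap \Z^n = \iconv(V)$. For each $v \in V$ pick a vector $a_v$ in the interior of the normal cone $N_v$ of $Q$ at $v$, and set $P\coloneqq \bigcap_{v \in V}\{x \st a_v \cdot x \leq a_v \cdot v\}$. Then $P\supseteq Q$, and because $a_w \in \intr(N_w)$ yields $a_w \cdot v < a_w \cdot w$ strictly for every $v \neq w$, each $v$ lies strictly inside every halfspace except the one indexed by $v$ itself. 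Hence $v$ is in the relative interior of the facet $F_v \coloneqq P \cap \{x \st a_v \cdot x = a_v \cdot v\}$, each $F_v$ is $(n-1)$-dimensional, and the $|V|$ defining hyperplanes are pairwise distinct, so $P$ has exactly $|V|$ facets, proving (a).

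For (b), the inclusion $\iconv(V) \subseteq Q \subseteq P$ is automatic, and the task is to choose the $a_v$ so that no extra integer points sneak into $P$. The intention is to make $P$ close to $Q$ in Hausdorff distance by aligning each $a_v$ with a facet normal of $Q$. First, maximality forces $V$ to be full-dimensional: if $V$ lay inside a rational hyperplane $H$, adjoining any $p \in \Z^n$ in the $\Z^n$-layer adjacent to $H$ produces the pyramid $\conv(V \cup \{p\})$, which is in convex position and whose only new lattice point is the apex $p$ itself (adjacent layers introduce no intermediate lattice points), contradicting maximality. Consequently the outer facet normals of $Q$ positively span $\R^n$.

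Concretely, I would pick a map $\sigma$ from $V$ onto the facets of $Q$ with $v$ a vertex of $\sigma(v)$ for every $v$, fix any $c_v \in \intr(N_v)$, and set $a_v \coloneqq n_{\sigma(v)} + \varepsilon c_v$, which lies in $\intr(N_v)$ for $\varepsilon > 0$. At $\varepsilon = 0$, surjectivity of $\sigma$ makes the defining halfspaces include every facet halfspace of $Q$, forcing the limit $P_0 = Q$; positive spanning of $\{n_{\sigma(v)}\}$ keeps $P_\varepsilon$ uniformly bounded for small $\varepsilon$; and the inequality $a_v \cdot x \leq a_v \cdot v$ rearranges to $n_{\sigma(v)}\cdot(x - v) \leq \varepsilon\, c_v \cdot (v - x) = O(\varepsilon)$ on the bounded set containing $P_\varepsilon$, giving Hausdorff convergence $P_\varepsilon \to Q$ as $\varepsilon \to 0$. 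Since $Q$ is compact and $\Z^n$ is discrete, the distance from $Q$ to any lattice point outside $\iconv(V)$ is strictly positive, so choosing $\varepsilon$ below that threshold yields $P_\varepsilon \cap \Z^n = \iconv(V)$. The main obstacle in carrying this through is securing the surjection $\sigma$ together with the Hausdorff-convergence step: one must leverage maximality to rule out the degenerate case in which $Q$ has strictly more facets than vertices, where $\sigma$ cannot be surjective and extra lattice points could in principle appear in $P_\varepsilon \setminus Q$.
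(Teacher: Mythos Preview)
Your construction for part (a) is fine and essentially matches the paper: pick for each $v\in V$ a linear functional strictly separating $v$ from $V\setminus\{v\}$, and take the intersection of the corresponding halfspaces. The full-dimensionality argument you give (via maximality and an adjacent lattice layer) is a nice touch that the paper leaves implicit.

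The problem is entirely in part (b). Your Hausdorff-convergence scheme hinges on the surjection $\sigma$ from $V$ onto the facets of $Q=\conv(V)$, and you yourself flag this as unresolved. It is a genuine obstruction, not a technicality: in dimension $n\geq 3$ a lattice polytope can have vastly more facets than vertices (the cross-polytope has $2n$ vertices and $2^n$ facets), and nothing you have said rules this out for a maximizer of $\alpha(n,k)$. If $\sigma$ is not onto, then at $\varepsilon=0$ your system omits some facet-defining inequality of $Q$, the limit $P_0$ is strictly larger than $Q$, and the convergence $P_\varepsilon\to Q$ simply fails. There is no evident way to repair this within the ``make $P$ close to $Q$'' framework.

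The paper avoids the issue entirely by not trying to control the shape of $P$. It takes \emph{any} $P$ satisfying (a) and argues (b) directly from maximality: if some integer point $x\in P\setminus\conv(V)$ existed, choose one minimizing $\dist(x,\conv(V))$. Then $V\cup\{x\}$ is in convex position, and any lattice point $y\in\conv(V\cup\{x\})\setminus\conv(V)$ with $y\neq x$ lies on a segment from $x$ to some $s\in\conv(V)$, so by convexity of $\dist(\cdot,\conv(V))$ it would be strictly closer to $\conv(V)$ than $x$, contradicting the choice of $x$. Hence $\iconv(V\cup\{x\})\setminus(V\cup\{x\})=\iconv(V)\setminus V$, contradicting the maximality of $|V|$. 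This closest-point trick is the missing idea in your attempt; it makes the facet count of $Q$ irrelevant.
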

\begin{proof}
Let $V = \{v_1, \dots, v_k\}$.  Because the points in $V$ are in convex position, for each $v_i\in V$ there exists $a_i\in\R^n$ and $b_i\in \R$ such that $a_i \cdot v_i = b_i$ and, for all $y\in V\setminus\{v_i\}$, $a_i \cdot y<b_i$.
Let $Ax \leq b$ be the system of $k$ inequalities $a_i \cdot x\leq b_i$ with $i\in \{1, \dots, k\}$, and let $P = \{x\in\R^n\st Ax\leq b\}$.
By construction $P$ has $|V|$ facets and satisfies (a).

Furthermore, $\conv(V)\subseteq P$, so $\iconv(V)\subseteq P\cap\Z^n$, as well.
Suppose, for contradiction, that $P$ does not satisfy (b), so that there exists an integral point in~$P$ that is not in the convex hull of $V$.
Let $x\in\Z^n$ be such a point that minimizes $\dist(x,\conv(V))\coloneqq \min_{s\in \conv(V)}\|x-s\|^2$.
Notice that $V\cup\{x\}$ is in convex position because we chose $x\notin\conv(V)$ and each point of $V$ is a unique point of $V$ in the relative interior of some facet of $P$.
It suffices for us to show that $\iconv(V)\setminus V = \iconv(V\cup\{x\})\setminus (V\cup\{x\})$, because this contradicts the maximality of $V$.

Indeed, suppose that there is an integral point $y\in P\setminus\conv(V)$ and some $s\in\conv(V)$ such that $y=\lambda x + (1-\lambda)s$, $0<\lambda<1$.
The function $\dist(\cdot,\conv(V))$ is a convex function on $\R^n$, so $\dist(y,\conv(V))\leq \lambda \dist(x,\conv(V)) < \dist(x,\conv(V))$, which contradicts the choice of $x$. 
Thus, $\iconv(V\cup\{x\})\setminus (V\cup\{x\}) = \iconv(V)\setminus V$, which completes the proof.
\end{proof}

Let $\vec{\bf t}$ denote the vector with all entries $t$.    The following lemma is implied by the recent result ~\cite[Theorem 2]{averkov2016}.

\begin{lemma}\label{lem:albc}
For all $n\geq1$ and $k\geq 0$, $c(n,k) \geq \alpha(n,k)$.
\end{lemma}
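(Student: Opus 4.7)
The plan is to construct, from a maximizer $V$ of $\alpha(n,k)$, a non-redundant polyhedral description with $|V|$ facets whose integer hull has exactly $k$ points. This immediately yields $c(n,k)\geq |V|=\alpha(n,k)$.

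First, I would invoke Lemma~\ref{lem: alpha P} on a maximizer $V=\{v_1,\dots,v_m\}\subseteq\Z^n$ of $\alpha(n,k)$, obtaining a polyhedron $P=\{x\in\R^n\st a_i\cdot x\leq b_i,\ i=1,\dots,m\}$ such that $v_i$ lies in the relative interior of the facet $F_i=\{x\in P\st a_i\cdot x=b_i\}$ for each $i$, and $P\cap\Z^n=\iconv(V)$. In particular, $|P\cap\Z^n|=|V|+k=m+k$.

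The next step is to shrink every facet slightly. For $\epsilon>0$ define
\begin{equation*}
P_\epsilon = \{x\in\R^n\st a_i\cdot x\leq b_i-\epsilon,\ i=1,\dots,m\}.
\end{equation*}
Since each $v_i$ is in the relative interior of $F_i$, we have $a_j\cdot v_i<b_j$ strictly for every $j\neq i$, so by choosing $\epsilon$ small enough we can guarantee $a_j\cdot v_i\leq b_j-\epsilon$ for all $j\neq i$, while $a_i\cdot v_i=b_i>b_i-\epsilon$. Hence each $v_i$ is excluded from $P_\epsilon$ precisely by the $i$-th inequality, while the remaining integer points of $\iconv(V)\setminus V$ — being strictly inside $P$ — stay inside $P_\epsilon$ for all sufficiently small $\epsilon$. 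Thus $P_\epsilon\cap\Z^n=\iconv(V)\setminus V$, a set of exactly $k$ integer points.

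Finally, the system is non-redundant: removing the $i$-th inequality reintroduces the point $v_i$ (which satisfies all the other shrunken inequalities, as shown), so the number of integer solutions strictly increases. Consequently, $P_\epsilon$ witnesses a non-redundant system with $m=\alpha(n,k)$ inequalities and $k$ integer solutions, giving $c(n,k)\geq \alpha(n,k)$. The only delicate point is ensuring a single $\epsilon$ works for every $i$ and $j\neq i$ simultaneously, but this follows at once from taking $\epsilon$ smaller than the finite minimum of the strict slacks $b_j-a_j\cdot v_i$ over $i\neq j$; no calculation beyond this is required.
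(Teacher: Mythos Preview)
Your proposal is correct and follows essentially the same approach as the paper's proof: invoke Lemma~\ref{lem: alpha P} on a maximizer $V$, then shrink every right-hand side by a small $\epsilon$ so that the resulting polyhedron has exactly the $k$ non-vertex lattice points of $\conv(V)$ and each inequality is witnessed by the corresponding $v_i$. The paper's argument is terser (it simply asserts $\iconv(V)\setminus V\subseteq\intr(P)$ and that removing any inequality adds a point of $V$), while you spell out the slack argument and the choice of $\epsilon$ more explicitly, but there is no substantive difference.
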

\begin{proof}
Let $V$ be a maximizer of~\eqref{eq: alpha(n,k)} and let $P=\{x\in\R^n \st Ax\leq b\}$ be the polyhedron from Lemma~\ref{lem: alpha P}. 
We have $\iconv(V)\setminus V \subseteq \intr(P)$. 
Thus, there exists $\epsilon>0$ such that, for $b'=b-\epsilon\vec{\bf 1}$, the polyhedron $P' = \{x\in\R^n \st  Ax\leq b'\}$ has $\iconv(V)\setminus V=P'\cap\Z^n$ and removing any inequality from $Ax\leq b'$ adds a point from $V$ to the polyhedron.
$c(n,k)$ is at least as large as the smallest subsystem of $Ax\leq b'$ that preserves the set of interior integral points. 
We have just shown that the smallest such subsystem is the entire system of inequalities, which has cardinality $|V|=\alpha(n,k)$.
Therefore, $c(n,k)\geq\alpha(n,k)$.
\end{proof}

\begin{figure}
\begin{center}
\begin{tikzpicture}[scale = 0.7]
\foreach \x in {0,...,3} {
  \foreach \y in {0,...,3} {
    \node[circle,fill=black, inner sep=0pt,minimum size=2pt] at (\x,\y) {};
  }
}

\coordinate (1) at (0,0);
\coordinate (2) at (0,1);
\coordinate (3) at (1,2);
\coordinate (4) at (3,3);
\coordinate (5) at (2,1);
\coordinate (6) at (1,0);

\draw (1)--(2)--(3)--(4)--(5)--(6)--cycle;

\begin{scope}[every node/.style={circle,fill=red,
inner sep=0pt,minimum size=5pt}]
\foreach \i in {1,...,6} {
  \node at (\i) {};
}
\end{scope}
\end{tikzpicture}
\hspace{1cm}
\begin{tikzpicture}[scale = 0.7]
\foreach \x in {0,...,3} {
  \foreach \y in {0,...,3} {
    \node[circle,fill=black, inner sep=0pt,minimum size=2pt] at (\x,\y) {};
  }
}

\coordinate (1) at (1,0);
\coordinate (2) at (2,0);
\coordinate (3) at (3,1);
\coordinate (4) at (3,2);
\coordinate (5) at (2,3);
\coordinate (6) at (1,3);
\coordinate (7) at (0,2);
\coordinate (8) at (0,1);

\draw (1)--(2)--(3)--(4)--(5)--(6)--(7)--(8)--cycle;

\begin{scope}[every node/.style={circle,fill=red,
inner sep=0pt,minimum size=5pt}]
\foreach \i in {1,...,8} {
  \node at (\i) {};
}
\end{scope}
\end{tikzpicture}
\hspace{1cm}
\begin{tikzpicture}[scale = 0.7]
\foreach \x in {0,...,4} {
  \foreach \y in {0,...,3} {
    \node[circle,fill=black, inner sep=0pt,minimum size=2pt] at (\x,\y) {};
  }
}

\coordinate (1) at (1,0);
\coordinate (2) at (0,1);
\coordinate (3) at (0,2);
\coordinate (4) at (1,3);
\coordinate (5) at (2,3);
\coordinate (6) at (4,2);
\coordinate (7) at (3,1);

\draw (1)--(2)--(3)--(4)--(5)--(6)--(7)--cycle;

\begin{scope}[every node/.style={circle,fill=red,
inner sep=0pt,minimum size=5pt}]
\foreach \i in {1,...,7} {
  \node at (\i) {};
}
\end{scope}
\end{tikzpicture}

\end{center}
\caption{This figure gives three examples of integer point configurations which imply lower bounds for $c(n,k) \geq \alpha(n,k)$.  In each example, the set $V$ is the set of integer points colored red that create the vertices of a polygon containing other integer points.  From left to right, these examples show that $\alpha(2,2) \geq 6$, $\alpha(2,4) \geq 8$, and $\alpha(2,5) \geq 7$.
}
\label{fig:alpha-examples}
\end{figure}
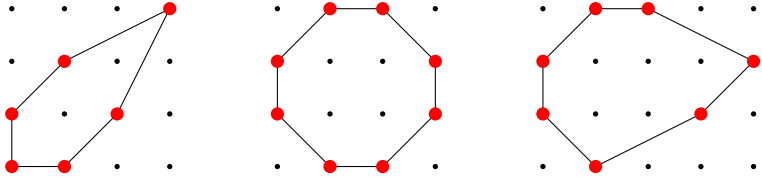

Notice that Lemma~\ref{lem:albc} is already a useful result
to quickly obtain some bounds on $c(n,k)$. For example, we
can get a much shorter proof of $c(n,1) \geq 2(2^n-1)$ than the example presented in~\cite{Aliev2016} (and
thus $c(n,1) = 2(2^n - 1)$ by our upper bound on $c(n,1)$). It suffices
to consider $V=(\{-1,0\}^n \cup \{0,1\}^n) \setminus \{0\}^n$,
and observe that $V$ is in convex position and fulfills
$|\iconv(V)\setminus V|=|\{0\}^n|=1$.
Hence, $\alpha(n,1) \geq |V| = 2(2^n-1)$, and by
Lemma~\ref{lem:albc}, $c(n,1) \geq \alpha(n,1) \geq 2(2^n-1)$.

The authors of~\cite{Aliev2016} use~\eqref{eq: qdbs bound} to prove $c(n,2)\leq 2(2^n-1)$ and leave it as an open question if this bound is tight.  We show it is indeed tight.  See~\cite[Theorem 5]{averkov2016} for similar proofs that exactly determine $c(n,k)$ for $k \leq 4$.   
\begin{proposition}\label{prop: c(n,2) lower bound}
$c(n,2) = 2(2^n-1)$.
\end{proposition}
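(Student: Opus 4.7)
The inequality $c(n,2)\leq 2(2^n-1)$ is already in hand from~\eqref{eq: qdbs bound} (or, alternatively, from Theorem~\ref{thm: subspaces} applied to any two-point set $X$), so the plan is to produce the matching lower bound. By Lemma~\ref{lem:albc}, it suffices to exhibit a set $V\subseteq \Z^n$ in convex position with $|V|=2(2^n-1)$ and $|\iconv(V)\setminus V|=2$.

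My proposed construction extends the $c(n,1)$-example recalled just above the statement: set
\[V \;\coloneqq\; \bigl(\{-1,0\}^n\setminus\{\mathbf{0}\}\bigr)\ \cup\ \bigl((\{1,2\}\times\{0,1\}^{n-1})\setminus\{e_1\}\bigr).\]
The two pieces are disjoint (first coordinate in $\{-1,0\}$ versus $\{1,2\}$), each has size $2^n-1$, so $|V|=2(2^n-1)$. Convex position is immediate: the hyperplane $\{x_1=1/2\}$ separates the two pieces, and each piece is a subset of the vertex set of a unit cube. The key observation for analysing $\iconv(V)$ is that $[1,2]\times[0,1]^{n-1}=[-1,0]^n+w$ with $w\coloneqq(2,1,\ldots,1)$, together with the fact that $\mathbf{0}$ and $e_1$ lie in $\conv(V)$ (easily verified, e.g.\ as midpoints of suitable pairs in $V$ when $n\geq 2$, and directly when $n=1$). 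Hence
\[\conv(V) \;=\; \conv\bigl([-1,0]^n\cup([-1,0]^n+w)\bigr) \;=\; [-1,0]^n + [0,1]\cdot w,\]
a Minkowski sum of the unit cube with the segment $[0,w]$.

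With this identification, an integer point $p$ lies in $\conv(V)$ iff there is $t\in[0,1]$ with $p-tw\in[-1,0]^n$, i.e.\
\[\max\{0,\,p_1/2,\,p_2,\ldots,p_n\}\ \leq\ \min\{1,\,(p_1+1)/2,\,p_2+1,\ldots,p_n+1\}.\]
A short case split on $p_1\in\{-1,0,1,2\}$ (the only admissible integer values in $[-1,2]$) will show that $p_2,\ldots,p_n$ must all lie in $\{-1,0\}$ when $p_1\in\{-1,0\}$ and in $\{0,1\}$ when $p_1\in\{1,2\}$; moreover, the presence of any extremal coordinate forces $t=0$ or $t=1$, eliminating any non-lattice points. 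The upshot is the clean equality $\conv(V)\cap\Z^n=\{-1,0\}^n\cup(\{1,2\}\times\{0,1\}^{n-1})$, so $\iconv(V)\setminus V=\{\mathbf{0},e_1\}$ has size exactly $2$. Lemma~\ref{lem:albc} then yields $c(n,2)\geq |V|=2(2^n-1)$, matching the upper bound. The only genuinely delicate step is the final case-bookkeeping on $p_1$ and on the range of the remaining coordinates; the rest is structural, resting on the Minkowski-sum shape of $\conv(V)$.
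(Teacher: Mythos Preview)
Your argument is correct. Both the upper bound and the lower bound construction go through as stated; the Minkowski-sum identity $\conv\bigl([-1,0]^n\cup([-1,0]^n+w)\bigr)=[-1,0]^n+[0,1]\,w$ is standard for convex $A$, and the case split on $p_1$ is routine. One small remark: for convex position you do not need the separating-hyperplane observation at all, since you have in fact shown that $V$ is exactly the vertex set of the zonotope $[-1,0]^n+[0,1]\,w$ (the only two ``cube vertices'' that fail to be vertices of the sum are $\mathbf 0$ and $e_1$, precisely the two points you removed).

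Your route differs from the paper's. The paper uses the set $V=(\{-1,0\}^n\cup\{0,1\}^n\cup\{\vec{\bf 2}\})\setminus\{\vec{\bf 0},\vec{\bf 1}\}$: two unit cubes sharing the vertex $\mathbf 0$, together with an extra far point $\vec{\bf 2}$ to restore convex position and the correct cardinality after deleting $\vec{\bf 0}$ and $\vec{\bf 1}$. The integer-point count there proceeds by a direct analysis of convex combinations involving $\vec{\bf 2}$. Your construction instead takes two \emph{disjoint} translates of the unit cube, whose convex hull is a prism; this buys you a clean parametrisation of $\conv(V)$ and turns the integer-point enumeration into the one-parameter interval intersection you wrote down. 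The paper's example is perhaps quicker to write down, while yours has a more transparent global shape and avoids the auxiliary point.
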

\begin{proof}
From~\eqref{eq: qdbs bound}, or also Theorem~\ref{thm: subspaces}, we see that $c(n,2) \leq 2(2^n -1)$.  
It remains to show that $c(n,2) \geq \alpha(n,2) \geq 2(2^n-1)$.

Consider the set $V = (\{-1,0\}^n \cup \{0,1\}^n\cup\{\vec{\bf{2}}\}) \setminus \{\vec {\bf{0}}, \vec{\bf 1}\}$.  Notice that $|V| = 2(2^n -1)$, $V$ is in convex position.  We claim that $(\conv(V) \cap \Z^n) \setminus V = \{\vec {\bf{0}}, \vec{\bf 1}\}$.   To see this, let $x \in (\conv(V) \cap \Z^n)\setminus V$.  That is, $x = \sum_{v \in V_1} \lambda_v v + \sum_{u \in V_2} \mu_u u + \gamma \vec{\bf{2}}$ for some $0 \leq \lambda_v, \mu_u, \gamma < 1$ and $V_1 = \{-1,0\}^n \setminus \{\vec {\bf{0}}\}$, $V_2 = \{0,1\}^n \setminus \{\vec {\bf{0}}, \vec {\bf{1}}\}$.  Clearly, $x = \vec {\bf{0}}$ and $x = \vec{\bf 1}$ are possible.  Otherwise, note that each entry $x_i$ of $x$ satisfies $-1 \leq x_i < 2$ for $i=1, \dots, n$.  Suppose for the sake of contradiction that $x_i = -1$ for some $1 \leq i\leq n$.  Since $x_i = -1$, we must have $\mu_u = 0$ for all $u \in V_2$ and $\gamma = 0$.  But then $x \in [-1,0]^n$.  Otherwise, $0 \leq x_i \leq 1$ for all $i=1, \dots, n$, that is, $x \in [0,1]^n$.  This implies the claim.
Hence, $c(n,2)\geq \alpha(n,2) \geq 2(2^n -1)$.
\end{proof}
Upon replacing $\vec{\bf 2}$ with $\vec{\bf k}$ in the last proof one can find lower bound of $2(2^{n}-1)$ on $c(X)$ for any collinear set $X$ of $k\geq 2$ points where $\iconv(X)=X$.
Thus $c(X)=2(2^n-1)$ for any set $X$ of $k\geq 2$ collinear points with $\iconv(X)=X$, by Theorem~\ref{thm: subspaces}.

We now use the integer points in a ball to give a lower bound on $c(n,k)$.  
We begin by exhibiting a sequence that bounds from below many values of $\alpha(n,k)$, and therefore $c(n,k)$, for many values of $k$.  We will then use Lemma~\ref{lem:inductive} to connect these lower bounds to all other values $k$.  Before the main theorem, we give a lemma that collects some results about integer points in spheres.  See also~\cite{Barany2008} for a discussion of this problem and related extremal problems.

Let $B_n(u,r) = \{x \in \R^n \st \|x - u\| \leq r\}$ be the Euclidean ball centered at $u$ with radius $r$.
For a convex body $K$ and a scalar $r > 0$, let $rK:= \{r x \st x \in K\}$.  For any $u \in \R^n$, we have that $r B_n(u,1) = B_n(r u, r)$.  

\begin{lemma}
\label{lemma:ball}
Let $n \geq 1$, $u \in \R^n$ with $\|u\| < 1$.  Then for $r > 0$, the following hold:
\begin{enumerate}[(i.)]
\item \label{item:one} $|r  B_n(u,1) \cap \Z^n| = \vol(B_n(0,1)) r^n + \mathcal{E}(r)$ where $\mathcal{E}(r) = O(r^\frac{n(n-1)}{n+1})$,
\item \label{item:two}  $C_1 r^\frac{n(n-1)}{n+1} \leq  |\verts(\conv(r B_n(u,1) \cap \Z^n))|\leq C_2 r^\frac{n(n-1)}{n+1}$.
\item \label{item:three} Let $U \subseteq \R^n$ be the set of vectors $u$ such that for every $r >0$ there is at most one integer point on the boundary of the ball $r B_n(u,1)$.  The set $U$ is dense in $\R^n$.   
\item \label{item:four} Let $r > \sqrt{n}$.  Then $\conv(B_n(u,r) \cap \Z^n)\supseteq B_n(u,r - \Delta)$ for $\Delta =  \sqrt{n}$.
\item \label{item:five} Let $r >  \sqrt{n}$.  Let $e$ be an edge (1-dimensional face) of $P = \conv((B_n(u,r)\cap\Z^n)$.  Then $e$
has length at most $4\Delta^{\frac{1}{2}} R^{\frac{1}{2}}$ for $\Delta = \sqrt{n}$.  
\end{enumerate}
\end{lemma}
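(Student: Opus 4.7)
The plan is to handle the five items separately, with (i) and (ii) imported as classical results, and (iii)--(v) proved by direct geometric arguments.

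Part (i) is a standard lattice-point count for Euclidean balls: the leading term $\vol(B_n(0,1))r^n$ is the usual volume approximation, and the error term $\mathcal{E}(r)=O(r^{n(n-1)/(n+1)})$ is the known sphere lattice-point discrepancy bound (Landau/Hlawka/Skriganov); I would cite it rather than prove it. Part (ii) is precisely the theorem of B\'ar\'any and Larman~\cite{Barany1998} referenced in the introduction, which asserts $\Theta(r^{n(n-1)/(n+1)})$ vertices for the integer hull of an $n$-ball of radius $r$; again I would quote it.

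For part (iii), I would show that the complement of $U$ is a countable union of affine hyperplanes, hence nowhere dense and of Lebesgue measure zero. Indeed, $u \notin U$ iff there exist distinct $a,b\in\Z^n$ with $\|u-a\|=\|u-b\|$, and this equation is equivalent to the linear condition $2u\cdot(b-a)=\|b\|^2-\|a\|^2$. Each pair $(a,b)$ determines a single affine hyperplane in $\R^n$, and since $\Z^n\times\Z^n$ is countable, the union of all these hyperplanes is meager, so its complement $U$ is dense.

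For part (iv), I would use the standard unit-cube argument. Given $x\in B_n(u,r-\sqrt{n})$, the integer vertices of the axis-aligned unit cube containing $x$ all lie within distance $\sqrt{n}$ of $x$ (the cube's diameter), hence within distance $(r-\sqrt{n})+\sqrt{n}=r$ of $u$, so they lie in $B_n(u,r)\cap\Z^n$; since $x$ is a convex combination of these $2^n$ integer points, $x\in\conv(B_n(u,r)\cap\Z^n)$.

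Part (v) follows from (iv) via the parallelogram identity. By (iv), $B_n(u,r-\Delta)\subseteq P$, so every boundary point of $P$ satisfies $\|x-u\|\geq r-\Delta$; in particular the midpoint $m=(v_1+v_2)/2$ of the edge $e$ between its integer endpoints $v_1,v_2$ satisfies $\|m-u\|\geq r-\Delta$, while $\|v_i-u\|\leq r$. The parallelogram identity applied to $v_1-u$ and $v_2-u$ gives
\[
\|v_1-v_2\|^2 = 2\bigl(\|v_1-u\|^2+\|v_2-u\|^2\bigr) - 4\|m-u\|^2 \leq 4r^2 - 4(r-\Delta)^2 = 4\Delta(2r-\Delta) \leq 8r\Delta,
\]
so $\|v_1-v_2\|\leq 2\sqrt{2}\sqrt{r\Delta}\leq 4\sqrt{\Delta\,r}$, matching the claimed bound (reading $R$ as $r$). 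The main obstacle is verifying that (i) and (ii) are cited in the form actually needed downstream; the remaining parts are routine once the cube/parallelogram setups are in place.
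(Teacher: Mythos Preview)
Your treatment of parts (i), (ii), and (v) matches the paper's: (i) and (ii) are cited, and (v) is the same chord-in-annulus computation (you phrase it via the parallelogram identity, the paper via the Pythagorean theorem, but the inequality $\|v_1-v_2\|^2 \leq 4r^2 - 4(r-\Delta)^2$ is identical). For (iv) you give a cleaner argument than the paper: the paper proceeds by contradiction, separating $B_n(u,r-\Delta)$ from $\iconv(B_n(u,r))$ by a hyperplane and then finding a lattice point in a sub-ball of radius $\Delta/2$ on the wrong side; your unit-cube argument is direct and shorter.

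Part (iii), however, has a genuine gap. Recall that $rB_n(u,1)=B_n(ru,r)$: the ball in question is centered at $ru$, not at $u$. Hence $u\notin U$ means there exist $r>0$ and distinct $a,b\in\Z^n$ with $\|ru-a\|=\|ru-b\|=r$, which is \emph{not} equivalent to $\|u-a\|=\|u-b\|$. Your perpendicular-bisector characterization therefore does not describe the bad set, and for a fixed pair $(a,b)$ this set is not a hyperplane in $u$: eliminating $r$ from the two equations $\|ru-a\|^2=r^2$ and $\|ru-b\|^2=r^2$ yields a genuine quadric in $u$ (plus a sign condition to ensure $r>0$). Your high-level strategy---showing the complement of $U$ is a countable union of proper algebraic sets, hence meager---can be salvaged once you carry out this elimination correctly, but the argument as written is wrong. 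The paper takes a different route: it fixes a direction $v$ with $\Q$-linearly independent coordinates and shows that on the ray $\{\lambda v:\lambda>0\}$ each lattice pair $(a,b)$ rules out at most one value of $\lambda$ (because the pair determines both $r$ and $\lambda$ uniquely); since $\Z^n\times\Z^n$ is countable, the bad $\lambda$'s on each such ray are countable, and taking the union over all such rays gives density.
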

\begin{proof}
 (\ref{item:one}.) This follows from more general results on convex bodies that contain the origin in the interior and have positive curvature on the boundary.  This was proved in~\cite{Corput1920} for $n=2$ and~\cite[Section 8, Theorem 9]{Hlawka1950} for $n\geq 3$.  See~\cite[Section 3.1 and 3.2]{sphere_survey2006} for a survey of these results and improvements.

 (\ref{item:two}.) This follows \cite[Theorem 5]{Barany1998} since $B_n(u,1)$ is a convex body that contains the origin in the interior and has positive curvature on the boundary.

 (\ref{item:three}.) Consider any $v \in \R^n$ such that the entries $v_1, \dots, v_n$ are linearly independent over $\mathbb{Q}$.  Note that such choices of $v$ are dense in $\R^n$ by Lemma~\ref{lem:q-linear-independence}.  
 Let $L_v = \{ \lambda v \st \lambda > 0\}$.  We will prove the statement by showing that a dense subset of $L_v$ satisfies the claim.  
Let $x^1, x^2 \in \Z^n$ be distinct points and suppose that $x^1, x^2 \in \partial( r B_n(\lambda v, 1))$ for some $r,\lambda > 0$, where $\partial$ denotes the boundary.  This implies that for $i=1,2$, 
\begin{equation}
\label{eq:first}
r^2 = \| x^i - r \lambda v\|^2 = \|x^i\|^2 - 2 r \lambda x^i \cdot v + r^2 \lambda^2 \|v\|^2.
\end{equation}
Combining these equations for $i=1,2$, we see that 
$$
0 = \|x^2\|^2 - \|x^1\|^2 - 2 r \lambda (x^2 - x^1) \cdot v.
$$
Since $x^1, x^2$ are distinct and the entries of $v$ are linearly independent over $\mathbb{Q}$, we have $(x^2 - x^1) \cdot v \neq 0$.  Thus, rearranging, we have
$$
r \lambda = \frac{\|x^2\|^2 - \|x^1\|^2}{2 (x^2 - x^1) \cdot v}.
$$
Plugging this back into \eqref{eq:first}, we see that $r$, and hence also $\lambda$, are uniquely determined by $x^1$,$x^2$, and $v$.  
Therefore, any pair $x^1,x^2 \in \Z^n$, has at most a single choice $\lambda v \in L_v$ such that $x^1,x^2 \in \partial( r B_n(\lambda v, 1))$ for some $r > 0$.    Since set of pairs of lattice points, that is, $\Z^n \times \Z^n$, is a countable set and $L_v$ is an uncountable set, there is a dense subset of $L_v$.  Since the set of $v$ with entries that are linearly independent over $\mathbb{Q}$ are dense in $\R^n$, the result follows by taking the union over all such rays $L_v$.

(\ref{item:four}.)  
If this is not the case, then, since these are convex sets, there exists an inequality   $a \cdot x \leq b$ valid for $\iconv(B_n(u,r))$, but not valid for $B_n(u,r -\Delta)$.  Let $y = \argmax\{ a \cdot x \st  x \in B_n(u, r- \Delta)\}$ be the unique maximizer of the linear functional $a \cdot x$.  Next, let $z = \tfrac{y - u}{r - \Delta}$.  Then $\|z\| = 1$.  Let $\lambda = r - \tfrac{\Delta}{2}$.  It follows that $B_n(u + \lambda z,\tfrac{\Delta}{2}) \cap  \iconv(B_n(u,r)) = \emptyset$ while $B_n(u + \lambda z,\tfrac{\Delta}{2}) \subseteq B_n(u,r)$.  But since any ball of radius $\tfrac{\Delta}{2}$ must contain an integer point, we have that $B_n(u + \lambda z,\tfrac{\Delta}{2}) \cap \Z^n \neq \emptyset$, which is a contradiction.

(\ref{item:five}.)
  Since $P  \supseteq B_n(u, r - \Delta)$, no edge of $P$ can intersect the interior of $B_n(r- \Delta)$.   Clearly the Euclidean length of any edge of $P$ is at most the length of the longest chord in $B_n(u,r) \setminus \intr(B_n(u,r-\Delta))$.
By the Pythagorean Theorem, it is easy to see that the length of the longest chord in $B_n(u,r) \setminus \intr(B_n(u,r-\Delta))$ is 
$$
2 \sqrt{ r^2 - (r-\Delta)^2} = 2 \sqrt{ 2 r \Delta - \Delta^2} \leq 4 \Delta^{\frac{1}{2}} r^{\frac{1}{2}}.
$$

\end{proof}

The following result appears with a precise lower bound in~\cite[Theorem 4]{averkov2016}.
\begin{theorem}
\label{thm:lower-bound}
For each $n\geq 2$, there exists a constant $C$, depending only on $n$, such that
$$
c(n,k) 
\geq C k^\frac{n-1}{n+1}.
$$
\end{theorem}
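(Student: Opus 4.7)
The plan is to exhibit, for each sufficiently large $k$, a lattice polytope with $\Omega(k^{(n-1)/(n+1)})$ vertices and at most $k$ non-vertex lattice points, and to convert this into a lower bound on $c(n,k)$ via Lemma~\ref{lem:albc} (vertex count bounds $\alpha$, hence $c$) and Lemma~\ref{lem:inductive} (bridge any small gap in the non-vertex count). The polytope of choice is the integer hull of a Euclidean ball $B_n(u,r)$ for a carefully chosen center $u$.

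Pick $u \in \R^n$ whose distances $\|x - u\|$ to all $x \in \Z^n$ are pairwise distinct; such centers form a dense set, since the bad set is a countable union of perpendicular bisector hyperplanes (a mild variant of Lemma~\ref{lemma:ball}(iii)). For $r > \|u\|$, the ball $B_n(u,r)$ equals $r \cdot B_n(u/r, 1)$ and thus falls within the scope of Lemma~\ref{lemma:ball}(i)--(ii). Let $P_r = \conv(B_n(u,r) \cap \Z^n)$, $f(r) = |B_n(u,r) \cap \Z^n|$, $v(r) = |\verts(P_r)|$, and $k(r) = f(r) - v(r)$. Since $\verts(P_r)$ is a set of $v(r)$ lattice points in convex position whose convex hull contains exactly $k(r)$ additional integer points, $\alpha(n, k(r)) \geq v(r)$, whence $c(n, k(r)) \geq v(r)$ by Lemma~\ref{lem:albc}.

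The key observation is that $k(r)$ is non-decreasing in $r$: as $r$ crosses a value where a new lattice point enters $P_r$, exactly one point $x^*$ is added (by distinct distances); $x^*$ lies outside the previous hull and becomes a new vertex, while some $m \geq 0$ former vertices may drop into the interior of the new hull, giving $\Delta f = 1$, $\Delta v = 1 - m$, and $\Delta k = m \geq 0$. Given $k$, set $r^* = \inf\{r > 0 : k(r) > k\}$. For any $r'$ just below $r^*$ (that is, in the interval separating $r^*$ from the previous jump), $k(r') \leq k$, while $k(r^*) \geq k + 1$ by integer-valuedness. Applying Lemma~\ref{lem:inductive} iteratively from $k(r')$ up to $k$ yields
\begin{equation*}
c(n, k) \;\geq\; c(n, k(r')) - (k - k(r')) \;\geq\; v(r') - k + k(r') \;=\; f(r') - k.
\end{equation*}

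To conclude, the identities $f(r^*) = f(r') + 1$ and $f(r^*) = v(r^*) + k(r^*) \geq v(r^*) + k + 1$ together give $f(r') \geq v(r^*) + k$, so $c(n, k) \geq v(r^*)$. Lemma~\ref{lemma:ball}(ii) yields $v(r^*) \geq C_1 (r^*)^{n(n-1)/(n+1)}$, and Lemma~\ref{lemma:ball}(i) combined with $f(r^*) \geq k + 1$ forces $r^* = \Omega(k^{1/n})$, so $c(n, k) = \Omega(k^{(n-1)/(n+1)})$ for $k$ large; small $k$ are covered by adjusting the constant. The main subtlety is that $v(r)$ is not monotone in $r$ (a single added point can destroy many former vertices), so $k(r)$ can skip values at a jump; the argument sidesteps this by bounding $c(n, k)$ through $f(r') - k$ and using $f(r') \geq v(r^*) + k$, which holds regardless of how much $v$ drops at $r^*$.
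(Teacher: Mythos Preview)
Your argument is correct and is genuinely simpler than the paper's. Both proofs use the integer hull of a Euclidean ball together with Lemma~\ref{lem:albc} and Lemma~\ref{lem:inductive}, but they diverge at the bridging step. The paper does not observe that $k(r)=f(r)-v(r)$ is monotone; instead it picks radii $r,R$ with $|N_r-N_R|=1$ and then has to control $|k-k_r|\leq 1+|v_r-v_R|$. Bounding $|v_r-v_R|$ is the Claim in the paper's proof and costs real work: it uses Lemma~\ref{lemma:ball}(\ref{item:four}.)--(\ref{item:five}.) on edge lengths of the integer hull and then Andrews' vertex bound to show the symmetric difference of the two vertex sets is $O(R^{\frac{n}{2}\cdot\frac{n-1}{n+1}})$. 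Your monotonicity observation (adding the single new extreme point $x^*$ increases $f$ by $1$ and changes $v$ by $1-m$ with $m\geq 0$, hence $\Delta k=m\geq 0$) eliminates this entire detour: from $k(r')\leq k<k(r^*)$ and $f(r^*)=f(r')+1$ you get $c(n,k)\geq f(r')-k\geq v(r^*)$ directly. What the paper's route buys is a quantitative estimate on how many vertices can be destroyed when one lattice point is appended, which is of some independent interest but unnecessary for the theorem.

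One small technical point: Lemma~\ref{lemma:ball}(i)--(ii) is phrased for $rB_n(u,1)$ with a \emph{fixed} $u$, whereas your identification $B_n(u,r)=r\cdot B_n(u/r,1)$ has the ``center'' $u/r$ varying with $r$, so the constants $C_1,C_2$ from the lemma are a priori $r$-dependent. This is harmless in practice (the B\'ar\'any--Larman constants for unit balls $B_n(w,1)$ are uniform once, say, $\|w\|\leq 1/2$, and you only need the trivial bound $f(r)=O(r^n)$ from part~(i)), but it would be cleaner either to note this uniformity explicitly or to run the same argument with the dilated balls $rB_n(u,1)$: Lemma~\ref{lemma:ball}(\ref{item:three}.) already supplies the ``one point at a time'' property for that family, and your monotonicity-of-$k(r)$ computation goes through verbatim.
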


\begin{proof}
Throughout, we will assume that the dimension $n$ is fixed.  Hence, many of the constants used in the proof will depend on $n$.

Using Lemma~\ref{lemma:ball}(\ref{item:three}.), fix any $u \in \R^n$ with $\|u\| < 1$ such that for all positive integers $N$, there exists a radius $R$ such that $|rB_n(u,R) \cap \Z^n| = N$.
Define 
$$
N_r \coloneqq | rB_n(u,1) \cap \Z^n|, \ \ \ v_r \coloneqq | \verts(\conv(rB_n(u,1) \cap \Z^n))|, \ \ k_r \coloneqq N_r - v_r.
$$
From Lemma~\ref{lemma:ball}, we have
\begin{equation}
\label{eq:Nr}
N_r =  \vol(B_n(0,1)) r^n  + \mathcal E(r), \ \ \ \text{ where } \ \ \   \mathcal{E}(r) = O(r^{\frac{n(n-1)}{n+1}})
\end{equation}
and
\begin{equation}
\label{eq:vr}
C_1\, r^{\frac{n(n-1)}{n+1}} \leq v_r \leq C_2\, r^{\frac{n(n-1)}{n+1}}.
\end{equation}
From~\eqref{eq:Nr} and~\eqref{eq:vr}, we see that there exist constants $C_3, C_4$ such that
\begin{equation}
\label{eq:kr}
C_3\,  r^n \leq  k_r \leq C_4\, r^n.
\end{equation}
It follows that 
\begin{equation*}
\alpha(n, k_r) \geq v_r  \geq C_1 r^{\frac{n(n-1)}{n+1}} \geq C_1 \left( \frac{k_r}{C_4} \right)^{\frac{n-1}{n+1}} = C_5\, k_r^{\frac{n-1}{n+1}}.
\end{equation*}
By Lemma~\ref{lem:albc}, we have that 
\begin{equation}
\label{eq:cnkr}
c(n,k_r) \geq C_5\, k_r^{\frac{n-1}{n+1}}.
\end{equation}
This shows that for all $k$ such that $k=k_r$, for some value $r$, the theorem holds.  We will now show that we can extend the lower bound to all sufficiently large values of $k$.

Fix a positive integer $k$.   Let $\{r_i\}_{i=1}^\infty$ be a sequence of $r_i > 0$ such that $N_{r_i} = i$.  This sequence exists by choice of $u$ with Lemma~\ref{lemma:ball}(\ref{item:three}.).  Therefore, $k_{r_1} = 0$ and $k_{r_i} \to \infty$ as $i \to \infty$, although this sequence is not monotonically increasing.  Thus, there must exist $r_i$, $r_{i+1}$, such that $k$ is between $k_{r_i}$ and $k_{r_{i+1}}$.  Thus, assigning $r,R$ to $r_i, r_{i+1}$ appropriately, we have that $k_r \leq k \leq k_R$ and 
\[
|N_r - N_R| = 1.
\]
For this to hold, we must have $|r - R| \leq n$.  
Therefore, it follows from~\eqref{eq:kr} that 
\begin{equation}
\label{eq:k-kr-bound}
 k \leq k_R \leq C_4 R^n \leq C_4(r+n)^n \leq C_6\, k_r
\end{equation}
for some constant $C_6$.  The last inequality follows from the binomial theorem.
By~\eqref{eq:cnkr} and~\eqref{eq:k-kr-bound} we see that
\begin{equation}
\label{eq:cnkrk}
c(n,k_r) \geq C_5 k_r^{\frac{n-1}{n+1}} \geq C_5 \left( \frac{k}{C_6} \right)^{\frac{n-1}{n+1}} =\left( \frac{C_5}{C_6^{\frac{n-1}{n+1}}}\right) k^\frac{n-1}{n+1}.
\end{equation}

Next, we derive an upper bound for the difference $|k - k_r|$.   
First notice that 
\begin{equation}
\label{eq:k-kr}
0
\leq  k-k_r 
\leq   k_R-k_r
\leq  N_R - N_r + v_r - v_R
\leq 1 + |v_r - v_R|.
\end{equation}
We will now bound the size of $|v_r - v_R|$.
We will assume that $R > r$, as the calculation is similar for $r < R$. 
\begin{claim}
There exists a constant $C_* >0$ such that $|v_r - v_R| \leq C_* (R^{\frac{n}{2}})^{\frac{n-1}{n+1}}$.
\end{claim}
 \begin{proof}
 \renewcommand{\qedsymbol}{$\Diamond$}

Let $\bar x \in (RB_n(u,1) \setminus rB_n(u,1) ) \cap \Z^n$ be the single integer point that is not in common in both balls.
Let $V_R = \verts(\conv(RB_n(u,1)\cap\Z^n))$, $V_r = \verts(\conv(rB_n(u,1)\cap\Z^n))$.  Let $P = \conv(V_R) = \conv(RB_n(u,1)\cap\Z^n)$ and $Q = \conv(V_R \setminus \{\bar x\})$.  

Let $V_{\bar x}$ denote the set of vertices $z$ of $P$ such that there exists an edge $[\bar x, z]$ of $P$.  Let $T = \conv(V_{\bar x} \cup \{\bar x\})$.  By construction, $T \supseteq P \setminus Q$.

By Lemma~\ref{lemma:ball}(\ref{item:five}), the edges of $P$ all have length at least $4 \Delta^{\frac{1}{2}} R^{\frac{1}{2}}$ where $\Delta = \sqrt{n}$.
By convexity of $T$ and the function $\| \cdot \|$, since $\|z - \bar x\| \leq 4 \Delta^{\frac{1}{2}} R^{\frac{1}{2}}$ for all $z \in V_{\bar x}$, we have that
$$
T \subseteq B_n(\bar x, 4 \Delta^{\frac{1}{2}} R^{\frac{1}{2}}).
$$
Therefore the volume $\vol(T) \leq C_7 R^{\frac{n}{2}}$.  By~\cite{andrews1963} (see also~\cite{Barany2008}), the number of vertices of any full-dimensional lattice polytope $K\subseteq \R^n$ is at most $C_8 \vol(K)^{\frac{n-1}{n+1}}$ for some constant $C_8$.  
$T$ is a full-dimensional lattice polytope, but we need to remove the vertex $\bar x$ since it is not a vertex of $Q$.
Consider instead the lattice polytope $K = \conv((T\setminus \{\bar x\} \cap \Z^n)$ and observe that $V_R \setminus V_r \subseteq \verts(K)$.  If $K$ is not full-dimensional, then $\verts(K) =  \verts(T) \setminus \{\bar x\}$.  Thus, whether or not $K$ is full-dimensional, we obtain the bound the number of vertices of $K$ as $|\verts(K)| \leq C_8 \vol(T)^{\frac{n-1}{n+1}}$.
It follows that $|v_r - V_R| = |V_r \setminus V_R| \leq C_8 (C_7 R^{\frac{n}{2}})^{\frac{n-1}{n+1}}$.  
Setting $C_* = C_8 C_7^{\frac{n-1}{n+1}}$ completes the proof of the claim.
\end{proof}

Adjusting for the $+1$ in equation~\eqref{eq:k-kr}, the claim in conjunction with~\eqref{eq:k-kr-bound} shows that there exists a constant $C_9$ such that 
\begin{equation}
\label{eq:bound-krkR-new} 
|k-k_r|  \leq C_{9} k_r^{\frac{n-1}{2(n+1)}}.
\end{equation}

Finally, by applying induction to Lemma~\ref{lem:inductive}, we see that
\begin{equation}
\label{eq:induction-bound-new}
c(n,k) \geq c(n,k_r) - |k - k_r|.
\end{equation}
Combining equations~\eqref{eq:cnkrk},~\eqref{eq:bound-krkR-new}, and~\eqref{eq:induction-bound-new}, we have that
\[
c(n,k) \geq  c(n,k_r) - |k - k_r| \geq     \left( \frac{C_5}{C_6^{\frac{n-1}{n+1}}}\right) k^\frac{n-1}{n+1}    - C_{9} k_r^{\frac{n-1}{2(n+1)}}  \geq C k^\frac{n-1}{n+1},
\]
where,  $C$ is a constant.  This completes the proof.
\end{proof}

The bound we give above on $v_r - v_R$ is likely quite loose.  This is because we use the fact that $\conv(B_n(u,R)\cap\Z^n) \supseteq B_n(u,R-\Delta)$ for $\Delta =  \sqrt{n}$ (Lemma~\ref{lemma:ball}(\ref{item:four})) and because we bound the volume of the cap by the volume of a ball with the same radius.  The value $\Delta$ can likely decrease with the size of $R$.  For comparison, $\verts(\conv(B_n(0,r)\cap\Z^n)) \subseteq B_n(0,r) \setminus B_n(0,r-\delta)$ where $\delta \leq C r^{- \frac{n-1}{n+1}}$~\cite{balog1991} .

Using similar techniques, we obtain an upper bound on $\mu_c(n,s)$.
\begin{proposition}
\label{prop:muconvex-bound}
For every fixed $n$, $\muconvex(n,s) = O(s^{(n+1)/(n-1)})$.  Therefore, $s_{n,k} = \Omega(k^{(n-1)/(n+1)})$.
\end{proposition}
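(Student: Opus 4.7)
The plan is to exhibit an explicit set $V$ of $s$ points in convex position whose midpoint set is small, by taking vertices of the integer hull of a Euclidean ball. Fix a displacement $u \in \R^n$ as in Lemma~\ref{lemma:ball}(\ref{item:three}.) and, for a radius $r$ to be chosen, write $W_r \coloneqq \verts(\conv(rB_n(u,1)\cap\Z^n))$. By Lemma~\ref{lemma:ball}(\ref{item:two}.), $|W_r|\geq C_1 r^{n(n-1)/(n+1)}$, so we can pick $r=\Theta\bigl(s^{(n+1)/(n(n-1))}\bigr)$ large enough that $|W_r|\geq s$. Let $V\subseteq W_r$ be any subset of size exactly $s$. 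Since a subset of a set in convex position is itself in convex position, $V$ qualifies in the definition of $\muconvex(n,s)$.

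Next I would bound $|M(V)|$ by counting the ambient half-integer points available. Every midpoint $\tfrac12(x_1+x_2)$ with $x_1,x_2\in V$ lies in $\tfrac12\Z^n$ and in $\conv(V)\subseteq rB_n(u,1)$. Scaling by $2$, the number of such half-integer points is at most $|\Z^n\cap 2r B_n(u,1)|$, which by the lattice-point count in Lemma~\ref{lemma:ball}(\ref{item:one}.) (or simply a volume comparison) is $O(r^n)$. Therefore
\begin{equation*}
|M(V)| \;\leq\; \left|\tfrac12\Z^n \cap rB_n(u,1)\right| \;=\; O(r^n) \;=\; O\!\left(s^{(n+1)/(n-1)}\right),
\end{equation*}
which proves the first part.

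For the second claim, recall $s_{n,k}=\min\{s:\muconvex(n,s)\geq k\}$. The upper bound just derived gives a constant $C$ with $\muconvex(n,s)\leq C\,s^{(n+1)/(n-1)}$, so $\muconvex(n,s)\geq k$ forces $s\geq (k/C)^{(n-1)/(n+1)}$, whence $s_{n,k}=\Omega\bigl(k^{(n-1)/(n+1)}\bigr)$.

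The only mild subtlety is ensuring that we can pick $r$ with both $|W_r|\geq s$ and $r=O\bigl(s^{(n+1)/(n(n-1))}\bigr)$; but this is immediate from the lower bound in Lemma~\ref{lemma:ball}(\ref{item:two}.) by taking the smallest $r$ for which $C_1 r^{n(n-1)/(n+1)}\geq s$. No convex-geometric argument beyond the ball estimates of Lemma~\ref{lemma:ball} is needed, and the main conceptual point is that the asymmetry between $|W_r|$ growing like $r^{n(n-1)/(n+1)}$ while the midpoint volume grows like $r^n$ is exactly what yields the exponent $(n+1)/(n-1)$.
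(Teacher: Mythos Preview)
Your argument is correct, and it is actually a cleaner route than the paper's. The paper does not take subsets or count half-integer points; instead it partitions the vertex set $V_R=\verts(\conv(RB_n(u,1)\cap\Z^n))$ into the $2^n$ parity classes, picks a largest class $S_R$, and uses the special center $u$ from Lemma~\ref{lemma:ball}(\ref{item:three}.) to guarantee that the size $s_R=|S_R|$ hits every positive integer as $R$ varies. Because all points of $S_R$ share a parity, every midpoint is an \emph{integer} point in $RB_n(u,1)$, so $|M(S_R)|\leq |RB_n(u,1)\cap\Z^n|=O(R^n)$, and then $s_R\geq 2^{-n}|V_R|=\Omega(R^{n(n-1)/(n+1)})$ gives the exponent.

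Your approach replaces both devices: by passing to an arbitrary subset of $W_r$ of size $s$, you avoid the need for the ``one-new-point-at-a-time'' center from item~(\ref{item:three}.) entirely (indeed any $u$ with $\|u\|<1$ would do in your argument), and by bounding $M(V)$ inside $\tfrac12\Z^n\cap rB_n(u,1)$ you sidestep the parity reduction at the cost of only a harmless $2^n$ factor in the implied constant. The paper's route has the minor conceptual payoff that the witnessing set has all midpoints integral, but for the bare asymptotic statement your argument is shorter and uses strictly less of Lemma~\ref{lemma:ball}.
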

\begin{proof}
Let $n$ be fixed.  
By Lemma~\ref{lemma:ball}(\ref{item:three}), there exists $u \in \R^n$ with $\|u\| < 1$ such that for all positive integers $N$, there exists $R_N>0$ such that $|R_N B_n(u,1) \cap \Z^n| = N$.  
For every $R$, let $V_R$ denote the set of vertices of the integer hull of $R B_n(u,1)$.  
Partition the set of vertices $V_R$ into parity sets $V_{R, x} := \{ v \in V_R \st  v \equiv x \pmod 2\}$ for $x \in \{0,1\}^n$.  Let $s_R = \max\{ |V_{R,x}| \st x \in \{0,1\}^n\}$ and $S_R = V_{R,x_R}$ for some $x_R \in \{0,1\}^n$ be a parity set that
corresponds to $s_R$, i.e., $s_R = |S_R|$.

By choice of $u$, $s_{R_{N+1}} \leq s_{R_N} + 1$.  Hence, for every positive integer $s$, there exists a radius $R$ such that $s = s_R$.  Since $S_R \subseteq \Z^n$ has the same parity, all midpoints of this set are integral, that is, $M(S_R) \subseteq R B_n(u,1) \cap \Z^n$.  By the pigeonhole principle, $s_R \geq \tfrac{1}{2^n} |V_R|$.
By Lemma~\ref{lemma:ball}(\ref{item:two}),  $s_R = \Omega(R^{\frac{n(n-1)}{n+1}})$ and $ |R B_n(u,1) \cap \Z^n| = O(R^n) = O(s_R^{(n+1)/(n-1)})$.  It follows that $\muconvex(n,s) = O(s^{({n+1})/({n-1})})$.  
\end{proof}

\section{Non-monotonicity of $c(n,k)$ in $k$}\label{sec: nonmonotonic}

It is easy to see that $c(n,k)$ is nondecreasing in $n$.
So, it is natural to ask whether $c(n,k)$ is nondecreasing in $k$. 
It is not.  We will rigorously show that $c(2,5) = 7$ while $c(2,4) = 8$.  Note that the values of $c(2,k)$ for $k\in \{0, \dots, 30\}$ computationally established recently~\cite{averkov2016}.

Lemmas~\ref{lem:upc25} and \ref{lem:upc24} show that $c(2,5)\leq 7$ and $c(2,4)\leq 8$.
The reverse inequalities are established by the examples in Figure~\ref{fig:alpha-examples}.
The next proposition will help with some case analysis in the proof of Lemma~\ref{lem:upc25}
\begin{proposition}\label{prop: planar triangulation}
Let $V\subseteq\R^2$.
If the points in $V$ are all collinear then $|M(V)|\geq |V|-1$.
If the points in $V$ are not all collinear then $|M(V)|\geq 2|V|-3$.
\end{proposition}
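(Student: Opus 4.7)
The plan is to reduce the proposition to a lower bound on the sumset $|V+V|$. Since every $v\in V$ satisfies $v=\tfrac{1}{2}(v+v)\in \tfrac{1}{2}(V+V)$, one has $V\subseteq \tfrac{1}{2}(V+V)$, and therefore $|M(V)|=|\tfrac{1}{2}(V+V)|-|V|=|V+V|-|V|$. The two inequalities to prove are thus $|V+V|\geq 2|V|-1$ in the collinear case and $|V+V|\geq 3|V|-3$ in the noncollinear case, which are precisely the classical $1$- and $2$-dimensional Freiman sumset lower bounds.

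The collinear bound is routine: order the points as $v_1,\dots,v_s$ along the line, and observe that the $2s-1$ sums $2v_1,v_1+v_2,\dots,v_1+v_s,v_2+v_s,\dots,2v_s$ are strictly increasing along the line, hence distinct.

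For the noncollinear bound I would induct on $s=|V|$. The base case $s=3$ is a direct check ($|V+V|=6$). For $s\geq 4$, pick a vertex $v^*$ of $\conv(V)$, set $V'=V\setminus\{v^*\}$, and split on whether $V'$ is collinear. When $V'\subseteq \ell$ for some line $\ell$, one has $v^*\notin \ell$, so the parallel lines $\ell+\ell$ and $v^*+\ell$ are disjoint and $2v^*$ lies on neither; the $s-1$ sums in $v^*+V'$ together with $2v^*$ are therefore all new relative to $V'+V'$, giving $|V+V|\geq(2(s-1)-1)+s=3s-3$. When $V'$ is noncollinear, the inductive hypothesis supplies $|V'+V'|\geq 3s-6$, so it suffices to produce three new sums. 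I would use the fact that $\conv(V)$, being a $2$-dimensional polygon, has at least three vertices, so $v^*$ has two distinct neighboring vertices $w_1,w_2\in V'$. Since the normal cone of $\conv(V)$ at $v^*$ is two-dimensional, I can pick linear functionals $f_1,f_2$ in its interior so that each $f_i$ uniquely maximizes over $V$ at $v^*$ and uniquely maximizes over $V'$ at $w_i$. The sums $2v^*$, $v^*+w_1$, $v^*+w_2$ are then pairwise distinct, and each strictly exceeds the $f_1$- or $f_2$-maximum of $V'+V'$, so none lies in $V'+V'$.

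The main obstacle will be this last step. A single linear functional only delivers two guaranteed ``top'' new sums (namely $2v^*$ and $v^*$ plus the $f$-maximizer of $V'$), so producing the third new sum forces one to use that at a vertex of a planar polygon the normal cone has two distinct extreme rays corresponding to the two incident edges. Choosing $f_1$ and $f_2$ close to those two rays is what lets the two different neighbors $w_1,w_2$ each serve as second maximizer in turn, and this is what yields the required third new sum.
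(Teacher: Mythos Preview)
Your reduction to the sumset inequality $|V+V|\geq 3|V|-3$ is valid and the inductive strategy works, but this is a genuinely different route from the paper's. The paper dispatches the noncollinear case in one line: any triangulation of $V$ has at least $2|V|-3$ edges (a standard count), the midpoints of distinct triangulation edges are distinct (two edges with the same midpoint would cross or overlap), and no edge midpoint lies in $V$ (a triangulation edge contains no vertex in its relative interior). So the paper trades your induction-plus-normal-cone machinery for a single appeal to planar triangulation combinatorics. Your argument is longer but self-contained in the additive sense: it does not import the edge count of a triangulation, and it makes the Freiman $2$-dimensional lower bound explicit, which fits naturally with the sumset language used elsewhere in the paper.

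There is, however, a small gap in your inductive step. You take $w_1,w_2$ to be the vertices of $\conv(V)$ adjacent to $v^*$ and then assert that one can choose $f_i$ in the interior of the normal cone at $v^*$ so that $f_i$ is uniquely maximized over $V'$ at $w_i$. This fails whenever the open edge of $\conv(V)$ from $v^*$ to $w_i$ contains another point of $V$: for $f_i$ near the outer normal of that edge, the maximizer over $V'$ is the point of $V$ on that edge closest to $v^*$, not the far endpoint $w_i$. The repair is to reverse the order of choices: first pick $f_1,f_2$ generic and close to the two extreme rays of the normal cone, and then \emph{define} $w_i$ to be the unique $f_i$-maximizer over $V'$. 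Each $w_i$ then lies on the edge $e_i$ of $\conv(V)$ incident to $v^*$ with outer normal near $f_i$, and since $e_1\cap e_2=\{v^*\}$ one still gets $w_1\neq w_2$; the rest of your argument (that $2v^*$, $v^*+w_1$, $v^*+w_2$ are three distinct elements of $(V+V)\setminus(V'+V')$) goes through verbatim.
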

\begin{proof}
The first claim is trivial. 
If the points are not collinear, then any triangulation of $V$ has at least $2|V|-3$ edges~\cite{de2000computational}, and each edge contains a distinct midpoint.
\end{proof}

\begin{lemma}\label{lem:upc25}
$c(2,5)\leq 7$.
\end{lemma}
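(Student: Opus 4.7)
The plan is to argue by contradiction. Suppose $c(2,5) \geq 8$. By Proposition~\ref{prop: c(S) vs c(n,k)}, pick $X \subseteq \Z^2$ with $|X| = 5$ and $c(X) \geq 8$. First, I dispose of the collinear case: if $X$ lies on a line, then $\iconv(X) = X$ and Theorem~\ref{thm: subspaces} (with $n=2$, $d=1$) gives $c(X) \leq c(1,5) + 2(2^2-2) = 2 + 4 = 6 < 8$, a contradiction. So henceforth $X$ is not collinear. Apply Lemma~\ref{lem: one point per facet} to a maximum non-redundant polyhedron $P$ with $P \cap \Z^2 = X$ and $8$ facets, producing $V \subseteq \Z^2$ with $|V| = 8$ in convex position such that $\iconv(V)\setminus V \subseteq X$, whence $|\iconv(V)\setminus V| \leq 5$.

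Now partition $V$ into parity classes $V_\pi$, $\pi \in \{0,1\}^2$, of sizes $a_1 \geq a_2 \geq a_3 \geq a_4$ summing to $8$. For each $V_\pi$ of size $\geq 2$, same-parity pair midpoints are integer points in $\iconv(V)\setminus V$ (and do not coincide with vertices of $V$ by convex position). Since no three points of any $V_\pi$ are collinear, Proposition~\ref{prop: planar triangulation} gives $|M(V_\pi)| \geq 2|V_\pi| - 3$ whenever $|V_\pi| \geq 3$. The contradiction is obtained by showing $|\iconv(V) \setminus V| \geq 6$ in every case. If $a_1 \geq 5$, already $|M(V_\pi)| \geq 7 > 5$, contradiction. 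If $a_1 = 4$, then $|M(V_\pi)| = 6$ unless $V_\pi$ is a parallelogram, in which case $|M(V_\pi)| = 5$; in the parallelogram subcase, a Varignon-type argument shows that two distinct parallelograms of different parities in $\R^2$ with $4$ integer vertices each cannot share all five midpoints (otherwise equal centers and edge midpoints force them to coincide, contradicting disjoint parity), so the union of midpoints from $V_\pi$ with midpoints from any other class of size $\geq 2$ has at least $6$ elements.

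If $a_1 = 3$ (profiles $(3,3,2,0)$, $(3,3,1,1)$, $(3,2,2,1)$), each triangular class gives at least $3$ integer midpoints, and I argue by geometric overlap analysis: a coincidence $(u+v)/2 = (u'+v')/2$ across different parity classes imposes an affine relation on $V$ inconsistent with enough of $V$ being in convex position, forcing the union to have at least $6$ distinct midpoints. The remaining and hardest case is $(2,2,2,2)$, where same-parity midpoints give at most $4$ integer points and direct counting is insufficient. Here I plan to invoke the non-collinearity of $X$ together with Pick's theorem applied to the lattice $8$-gon $\conv(V)$: Rabinowitz's inequality yields area at least $8^3/(8\pi^2) > 6.48$, and in fact the minimum area of a convex lattice $8$-gon is $7$, so Pick forces sufficient interior and boundary lattice points; combined with parity constraints on the edge-vector $\gcd$s of $\conv(V)$ (edges between same-parity consecutive vertices contribute even $\gcd$, hence extra lattice points on the boundary), this yields $|\iconv(V)\setminus V| \geq 6$, finishing the proof.

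The principal obstacle is the $(2,2,2,2)$ case: simple midpoint counting produces only $4$ integer points, so it is essential to combine the area bound from Rabinowitz and Pick with a careful parity analysis of the edges of $\conv(V)$ to conjure the sixth lattice point.
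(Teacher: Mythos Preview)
Your strategy partitions only $V$ by parity, whereas the paper partitions $V\cup X$ (all $|V|+5$ points). This seemingly small change is decisive. With $|V\cup X|\geq 13$ points distributed among four parity classes, pigeonhole immediately gives a class of size at least~$4$, so the paper never has to confront the profiles $(3,3,2,0)$, $(3,3,1,1)$, $(3,2,2,1)$, or $(2,2,2,2)$ at all; the only substantive case is a single class of size exactly~$4$, which is then analyzed via the parallelogram structure of its five midpoints. Your route, by contrast, must handle all of those profiles, and the $(2,2,2,2)$ case is where your plan breaks.

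Concretely: your stated goal in every case is to prove $|\iconv(V)\setminus V|\geq 6$, but this is \emph{false} for the profile $(2,2,2,2)$. The lattice octagon with vertices $(1,0),(2,0),(3,1),(3,2),(2,3),(1,3),(0,2),(0,1)$ (exactly the middle example in Figure~\ref{fig:alpha-examples}) has $|V|=8$, parity profile $(2,2,2,2)$, area~$7$, and exactly four non-vertex lattice points $(1,1),(1,2),(2,1),(2,2)$, which are not collinear. So neither the Rabinowitz/Pick bound (which yields only $|\iconv(V)\setminus V|\geq A+b/2-7=7+4-7=4$ here) nor your edge-$\gcd$ parity remark (all eight edges of this octagon are primitive, with no two consecutive vertices sharing a parity) can push the count to~$6$. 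Your non-collinearity hypothesis on $X$ does not help either: you only know $\iconv(V)\setminus V\subseteq X$, not equality, so the shape of $X$ does not directly constrain $\iconv(V)$.

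Your handling of the $a_1=4$ and $a_1=3$ cases also has gaps. The ``Varignon-type'' argument you sketch addresses only the profile $(4,4,0,0)$; for $(4,2,1,1)$ you would need the single midpoint from the size-$2$ class to avoid all five parallelogram midpoints, and there is no parity obstruction to such a coincidence. Similarly, two triangles of different parity can share one or two side-midpoints, so the $(3,3,\cdot,\cdot)$ counts require genuine work you have not supplied. The paper's device of including $X$ in the parity partition is precisely what makes all of these difficulties evaporate; I recommend adopting it.
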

\begin{proof}
Let $P\subseteq \R^2$ be a non-redundant polytope with $c(2,5)$ facets and containing five integer points.
Applying Lemma~\ref{lem: one point per facet} yields a set $V\subseteq\Z^2$ of cardinality $c(2,5)$ in convex position such that $\iconv(V)\setminus V \subseteq P\cap\Z^2 $.
We must show $|V|\leq 7$.

Partition the points in $V\cup (P\cap\Z^2)$ according to their parities.  
If every parity class has three or fewer points, then there are at most $12$ points.
Hence, $|V|\leq 7$ follows from $|V| + 5\leq 12$.
If any parity class contains five or more points, then Proposition~\ref{prop: planar triangulation} implies that $|P\cap\Z^2|\geq 7$, a contradiction.
We will now show that it is possible for a parity class to have four points, but in this case the remaining parity classes have no more than eight points combined. 
This implies $c(2,5)\leq 7$.

First, if any parity class contains four collinear points, then all five of the integer points in $P$ are collinear.
That is because adjacent integer points on a line must have different parities, so that we have seven collinear points in total, five of which are in $P$.
It follows from Theorem~\ref{thm: subspaces} that $|V|\leq 6$.

Suppose that no parity class contains four collinear points, but some parity class contains four points that are not collinear.
By Proposition~\ref{prop: planar triangulation} there are at least five (integral) midpoints among them.
As all of the midpoints are integral points in $P$, there can be no more than five, so the four points of the parity class are members of $V$.
Since the four points have only five (rather than six) distinct midpoints they must form the vertices of a parallelogram.

Consider the parity classes of the five interior integer points.
Looking at Figure~\ref{fig: c(2,5) situation} and using the fact that adjacent integer points on a line must have different parities we see that the five interior integer points are split among three parity classes in groups of size $2$, $2$, and $1$.
The points in the figure are grouped as $\{a,b\},\{d,e\},\{c\}$.

Now, one parity class of $V\cup (P\cap\Z^2)$ is taken entirely by the four vertices of the parallelogram, so we are left with the five interior points in the three remaining parity classes, with two classes containing two of the points and the third class containing one point.
We will prove that no point in $V$ lies in the parity classes with $a,b$ or $d,e$.
All together, this implies that the size of the classes are at most $4,4,2,2$ hence there are $12$ total points so $|V|\leq 12-5=7$.

\begin{figure}\label{fig: c(2,5) situation}
\begin{center}
\begin{tikzpicture}[scale = 0.7]
\tikzstyle{gridpoint} = [circle,fill=black,inner sep=0pt,minimum size=2pt]
\tikzstyle{vertpoint} = [circle,fill=red,inner sep=0pt,minimum size=5pt]
\tikzstyle{intpoint} = [circle,fill=black,inner sep=0pt,minimum size=5pt]

\foreach \x in {0,...,4} {
  \foreach \y in {0,...,2} {
    \node[gridpoint] at (\x,\y) {};
  }
}
\foreach \x/\y in {0/0,2/0,2/2,4/2} {
  \node[vertpoint] at (\x,\y) {};
}
\node[intpoint,label={[label distance=-0.1cm]above right:$a$}] at (1,1) {};
\node[intpoint,label={[label distance=-0.1cm]above right:$b$}] at (3,1) {};
\node[intpoint,label={[label distance=-0.1cm]above right:$d$}] at (1,0) {};
\node[intpoint,label={[label distance=-0.1cm]above right:$e$}] at (3,2) {};
\node[intpoint,label={[label distance=-0.1cm]above right:$c$}] at (2,1) {};
\end{tikzpicture}
\end{center}
\caption{The five interior (black) and facet (red) points for the final case in the proof of Lemma~\ref{lem:upc25}.}
\end{figure}
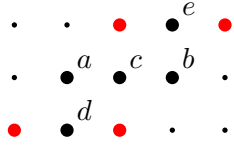
Suppose, for contradiction, that a point $z\in V$ is in the same parity class as $a$ and $b$.
We have $m_1 = \frac{1}{2}(z+a)$ and $m_2=\frac{1}{2}(z+b)$ are in $\intr(P')$.
If we show that $\{m_1,m_2\}\not\subseteq\{a,b,c,d,e\}$, then we have the desired contradiction.
We have already shown that $z$ cannot be a vertex of the parallelogram.
If $z$ is collinear with $a$ and $b$, then at least one of $\{m_1,m_2\}$ is not among $\{a,b,c,d,e\}$ which is a contradiction.
If $z$ is not collinear with $a$ and $b$, then it lies in an open half-space to one side of the line through $a$ and $b$.
The midpoints $m_1$ and $m_2$ lie in the same open half space, but none of $\{a,b,c\}$ are there and at most one of $\{d,e\}$.
Therefore, we reach the desired contradiction $\{m_1,m_2\}\not\subseteq\{a,b,c,d,e\}$.
\end{proof}

\begin{lemma}\label{lem:upc24}
$c(2,4)\leq 8$.
\end{lemma}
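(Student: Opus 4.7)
The plan is that $c(2,4)\le 8$ is an essentially immediate numerical consequence of Theorem~\ref{thm:c(2 k)}. Substituting $k=4$ into the bound $c(2,k)\le \lfloor 4.43(k+4)^{1/3}\rfloor$ gives $\lfloor 4.43\cdot 8^{1/3}\rfloor=\lfloor 8.86\rfloor=8$, which finishes the proof. All the structural work---applying Lemma~\ref{lem: one point per facet} to produce a set $V\subseteq\Z^2$ in convex position with $|V|=c(2,4)$, invoking Pick's theorem on the lattice polygon $\conv(V)$, and using Rabinowitz's lower bound on the area of a convex lattice $v$-gon---has already been carried out in the proof of Theorem~\ref{thm:c(2 k)}, so no additional case analysis is required for $k=4$. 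This is unlike Lemma~\ref{lem:upc25}: for $k=5$, Theorem~\ref{thm:c(2 k)} only yields $\lfloor 4.43\cdot 9^{1/3}\rfloor = 9$, one above the target $7$, which is precisely why that lemma needed a delicate parity argument, while here the threshold falls on the right side of the nearest integer for free.

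If one wanted a self-contained parity argument parallel to the one in Lemma~\ref{lem:upc25}, the natural strategy would be to apply Lemma~\ref{lem: one point per facet} to get $V\subseteq\Z^2$ in convex position with $|V|=c(2,4)$ and $\iconv(V)\setminus V\subseteq P\cap\Z^2$, partition $V\cup(P\cap\Z^2)$ into the four parity classes $A_i$ of $(\Z/2)^2$, and prove that $|A_i|\le 3$ for every $i$. That would give $|V|+4\le 4\cdot 3=12$ and hence $|V|\le 8$. For $V_i=V\cap A_i$ one notes that convex position rules out three collinear points, so by Proposition~\ref{prop: planar triangulation} one has $|M(V_i)|\ge 2|V_i|-3$; since the midpoints of same-parity pairs in $V$ lie in $\iconv(V)\setminus V\subseteq P\cap\Z^2$ (a set of size $4$), this immediately forces $|V_i|\le 3$. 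One also quickly rules out $|p_i|\ge 3$ (where $p_i=A_i\cap P\cap\Z^2$) by computing midpoints of $p_i$ inside the convex set $P$ and hitting the cap $|P\cap\Z^2|=4$.

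The main obstacle in the parity approach would be the mixed configurations with $|A_i|\ge 4$, namely $(|V_i|,|p_i|)\in\{(3,1),(2,2),(3,2)\}$, where one must track how the midpoints of $V_i$, the midpoints of $p_i$, and the mixed midpoints $\tfrac12(v+q)$ (which live in $V\cup(P\cap\Z^2)$) interact; this is analogous to the final parallelogram case of Lemma~\ref{lem:upc25} but with several subcases rather than one. Because Theorem~\ref{thm:c(2 k)} already settles the inequality cleanly at $k=4$, I would present the proof simply as an invocation of that theorem, leaving the combinatorial bookkeeping implicit.
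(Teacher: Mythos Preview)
Your proof is correct, but it takes a different route from the paper's. The paper's proof of Lemma~\ref{lem:upc24} is a one-liner combining Lemma~\ref{lem:inductive} with Lemma~\ref{lem:upc25}: from $c(n,k)\ge c(n,k-1)-1$ one gets $c(2,4)\le c(2,5)+1\le 7+1=8$. In other words, the paper deduces the $k=4$ bound \emph{from} the hard $k=5$ case, whereas you bypass Lemma~\ref{lem:upc25} entirely and read off $c(2,4)\le 8$ directly from Theorem~\ref{thm:c(2 k)} via $\lfloor 4.43\cdot 8^{1/3}\rfloor=8$. Your approach is arguably cleaner as a standalone proof, since it does not rely on the delicate parity analysis of Lemma~\ref{lem:upc25}; the paper's approach, on the other hand, is natural in the narrative of Section~\ref{sec: nonmonotonic}, where the point is precisely to exhibit the pair $c(2,5)=7<8=c(2,4)$, so once the hard bound $c(2,5)\le 7$ is in hand, the $k=4$ bound drops out for free from the inductive lemma. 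Your observation that Theorem~\ref{thm:c(2 k)} just misses at $k=5$ (yielding $9$ rather than $7$) nicely explains why the two lemmas require such different amounts of work. The alternative parity sketch in your second and third paragraphs is not needed and, as you note, would require nontrivial casework to complete; your decision to rest the proof on Theorem~\ref{thm:c(2 k)} alone is the right call.
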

\begin{proof}
By Lemmas~\ref{lem:inductive} and~\ref{lem:upc25} we have
$c(2,4)\leq c(2,5)+1\leq 8$.
\end{proof}

\section*{Acknowledgement}
We would like to thank an anonymous reviewer who proposed we use Theorem~\ref{thm:schoen} and the Pl{\"u}nnecke-Ruzsa estimates to prove Proposition~\ref{prop:sum17}, reducing our previous upper bound $c(n,k)\leq O(k\log\log k / \log^{1/3}k)\cdot 2^n$ down to $c(n,k)\leq 2^nk/e^{\Omega(\log ^{1/7}k)}$.

\bibliographystyle{plain}
\providecommand\CheckAccent[1]{\accent20 #1}

{}

\end{document}